\newtheorem{theorem}{Theorem}
\newtheorem{lemma}{Lemma}
\newtheorem{example}{Example}
\newtheorem{definition}{Definition}
\newtheorem{cor}{Corollary}
\newtheorem{proposition}{Proposition}
\newtheorem*{claim}{Claim}
\def\ld{\mathbin{\backslash}}
\def\rd{\mathbin{/}}
\newcommand{\bl}{\underline{\hspace{1ex}}}
\newcommand{\blambda}{\boldsymbol{\lambda}}
\newcommand{\brho}{\boldsymbol{\rho}}
\title{Beyond wreath and block}
\author[Botur]{Michal Botur$^\dag$}
\author[Kowalski]{Tomasz Kowalski$^\ddag$}
\address{$^\dag$ Chair of Algebra and Geometry, Palacky University, Olomouc}
\email{michal.botur@upol.cz}
\address{$^\ddag$ Department of Logic, Jagiellonian University, Kraków}
\email{tomasz.s.kowalski@uj.edu.pl}
\keywords{wreath product, block product, semigroup constructions}
\begin{document}

\begin{abstract}
We investigate a semigroup construction generalising the two-sided
wreath product.  We develop the foundations of this construction
and show that for groups it is isomorphic to the usual
wreath product. We also show that it gives a 
slightly finer version of the decomposition in the 
Krohn-Rhodes Theorem, in which the three-element flip-flop monoid is replaced by
the two-element semilattice. 
\end{abstract}  

\maketitle

\section{Introduction}\label{intro}

The purpose of this article is to introduce and investigate a certain semigroup
construction which encompasses a range of known constructions including
wreath products and block products.
The construction, which we call a $\lambda\rho$-\emph{product},
is inspired by the standard way of presenting the wreath product, 
say, of groups, as a direct power $G^X$ together with a group $K$ acting on $X$,
that is, a set of bijective maps $X\to X$, indexed by
elements of $K$. For semigroups, the restriction to bijections seems artificial: after
all, semigroups are representable as semigroups of arbitrary maps. And if the
maps do not have to be surjective, there seems to be no reason for having the
same set of coordinates for every element of $K$.  

A rudimentary construction of this type has been used in~\cite{JM06}
to settle some questions about \emph{generalised BL-algebras}, which are
a subclass of certain special lattice-ordered monoids known as
\emph{residuated lattices}. For the purposes of this article, familiarity with
residuated lattices is not necessary, but the interested reader is referred
to~\cite{JT02} for a very readable albeit slightly old survey.

The construction was expanded and
investigated in~\cite{DK14}, under the name of \emph{kites}, still in the
context of residuated lattices. A very simple example of a kite can be
informally described as follows. Start with $(\mathbb{Z}; \leq, +, 0)$ as a
lattice-ordered 
group. Take $\mathbb{Z}\times \mathbb{Z}$ and another copy of $\mathbb{Z}$;
extend the natural order on $\mathbb{Z}\times \mathbb{Z}$ and
$\mathbb{Z}$
by putting $\mathbb{Z}\times \mathbb{Z}$ on top of
$\mathbb{Z}$; truncate to the interval $[0, \langle 0,0\rangle]$. Products
in the top part are as in $\mathbb{Z}\times \mathbb{Z}$. The other products
are defined by
\begin{align*}
  \langle x,y\rangle\cdot i &= \max\{x+i,0\}\\
  i\cdot\langle x,y\rangle &=  \max\{y+i,0\}\\
  i\cdot j &= 0
\end{align*}
This behaviour can be described with the help of a two element semigroup
$\{a,b\}$ satisfying $a^2 = a$ and $uv = b$ for all other products. 
We think of $a$ as indexing the top part, $b$ as indexing the bottom part. 
Then, to give an alternative definition of product in our kite
we may use a set of maps $\lambda$ (the left maps) and
$\rho$ (the right maps) between the sets of coordinates, telling us which
coordinate to take for which product. Thus, 
$\langle x,y\rangle\cdot i$ can be presented as
$$
(\langle x,y\rangle, a)\cdot (i, b) =
\bigl((\langle x,y\rangle\circ \lambda[a,b])\cdot (i\circ \rho[a,b]), ab\bigr) 
$$
where $\lambda[a,b]\colon I[ab] \to I[a]$ is given by
$\lambda[a,b](0) = 0$, and $\rho[a,b]\colon I[ab] \to I[b]$ is given by
$\rho[a,b](0) = 0$ (as $ab = b$, we have $I[ab] = I[b]$). Calculating the
product will then give 
$$
\bigl((\langle x,y\rangle\circ \lambda[a,b])\cdot (i\circ \rho[a,b]), ab\bigr)
= (x + i, ab)
$$
which is precisely what we want, disregarding the truncation. Similarly
\begin{align*}
(i, b)\cdot (\langle x,y\rangle, a)
&= \bigl((i\circ \lambda[b,a])\cdot (\langle x,y\rangle\circ \rho[b,a]), ba\bigr)\\ 
&= (i + y, ba) 
\end{align*}  
where $\rho[b,a]\colon I[ba]\to I[a]$ is given by $\rho[b,a](0) = 1$,
and $\lambda[b,a]\colon I[ba]\to I[b]$ is the identity, of course.

The product defined this way is associative, but it also turns out to be
residuated, which makes the 
algebra just defined a residuated lattice. Another version of the same
construction arises by replacing the bottom copy of $\mathbb{Z}$, by
$\mathbb{Z}\times\mathbb{Z}$, truncating to the interval
$[\langle 0,0\rangle, \langle 0,0\rangle]$, and defining the products 
between the top and the bottom parts by
\begin{align*}
  \langle a,b\rangle\cdot \langle i,j\rangle  &=
      \max\{\langle a+j,b+i\rangle,\langle 0,0\rangle\}\\
  \langle i,j\rangle\cdot\langle a,b\rangle &=
      \max\{\langle a+i,b+j\rangle,\langle 0,0\rangle\}
\end{align*}
where $\langle a,b\rangle$ comes from the top and 
$\langle i,j\rangle$ comes from the bottom. Note the coordinate swap in one, but
not in the other. The products can also be defined via an appropriate system of maps
$\lambda$ and $\rho$ analogously to the previous example. The algebra obtained this way is
isomorphic (see~\cite{DK14}, Example~4.5) to a truncation of a subgroup $G$ of the
antilexicographically ordered wreath product $\mathbb{Z}\wr\mathbb{Z}$
consisting of the elements  $\langle\langle a_\ell : i\in\mathbb{Z}\rangle,
b\rangle$ such that $\ell = k \pmod 2$ implies $a_\ell = a_k$. 

The kite construction comes in two parts: the twisting and the truncation. The
truncation is important for certain order theoretic purposes, but it does not
play any role for the product definition, if the twisting is handled with care.

A series of applications and further
generalisations of the kite construction followed, see, e.g.,~\cite{DH14}
and~\cite{BD15}. 
Another modification was put to a good use in~\cite{BKLT16}. All these, however,
stayed within the area of ordered structures, and the interaction of
multiplication with order was the main focus. It was clear from the beginning
that the kite construction is closely related to wreath products of 
ordered structures, for example from~\cite{JT04}, or specifically for
lattice-ordered groups from~\cite{HMC69}. 
Considering order, however, seems
to have obscured the properties of the multiplicative structure to some extent.

Here we will not consider order at all and investigate only the multiplicative
structure. It will turn out that certain semigroups not decomposable by
standard constructions, are decomposable by ours. We will apply this 
to the celebrated Krohn-Rhodes Theorem (originally in~\cite{KR62}, see
also~\cite{RS09}), replacing the three-element monoid $L_2^1$ (or
$R_2^1$) by the two-element semilattice. We quickly admit that this application
piggybacks on Krohn-Rhodes Theorem: it uses its full force, adding only that
$L_2^1$ can be further decomposed using our construction.

We will also show that our construction applied to groups coincides with the
usual wreath product. This, we believe, shows the naturalness of the
construction, and we consider it the main result of the article. 

Section~\ref{constr} below defines $\lambda\rho$-systems and
$\lambda\rho$-products.  Section~\ref{cats} uses Grothendieck construction
to define a category of $\lambda\rho$-systems in a natural way.
Section~\ref{free-constr} gives a construction of a $\lambda\rho$-system
out of any family of sets, over a free semigroup generated by the index set
of the family. 
Section~\ref{simpli} considers $\lambda\rho$-systems over monoids, and
Section~\ref{wreath} shows that
$\lambda\rho$-products for groups coincide with wreath products. 

\section{The construction}\label{constr}

\subsection{Notation}
We use the category-theoretic notation for composition of maps, that is, 
for maps $f\colon A\longrightarrow B$ and $g\colon B\longrightarrow C$
we denote their composition by $g\circ f\colon A\longrightarrow C$, so that
$(g\circ f)(a) = g(f(a))$ for all $a\in A$. The set of all
maps from $A$ to $B$ we denote by the usual $B^A$. For a map
$f\colon A\longrightarrow B$ and a set $I$ we write
$f^I\colon A^I\to B^I$ for the map defined by $f^I(x)(i)=f(x(i))$.
The following easy proposition (in which by groupoid we mean
an algebra with a single binary operation) will be used repeatedly without further ado.

\begin{proposition}\label{P1}
Let $\mathbf{G} = (G;\cdot)$ be a groupoid, and let $I$, $J$ be sets. 
Then for all $x,y\in G^I$ and any $f\in I^J$ the following equality holds
$$(x\circ f)\cdot (y\circ f)= (x\cdot y)\circ f.$$
\end{proposition}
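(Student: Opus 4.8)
The plan is to unwind the definitions and verify the identity pointwise, since both sides are elements of $G^J$ and equality of maps is tested argument-by-argument. First I would recall that the product on any power $G^I$ is the pointwise one inherited from $\mathbf{G}$, i.e. $(u\cdot v)(i) = u(i)\cdot v(i)$ for $u,v\in G^I$ and $i\in I$, and likewise on $G^J$; this is the only ``definition'' the statement depends on, so it is worth stating explicitly before the computation.

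Next I would fix an arbitrary $j\in J$ and compute the value of the left-hand side at $j$. Using the pointwise product on $G^J$ and then the definition of composition, one gets
$$
\bigl((x\circ f)\cdot(y\circ f)\bigr)(j)
= (x\circ f)(j)\cdot (y\circ f)(j)
= x(f(j))\cdot y(f(j)).
$$
On the other hand, the right-hand side evaluated at $j$ gives, by definition of composition and then the pointwise product on $G^I$,
$$
\bigl((x\cdot y)\circ f\bigr)(j) = (x\cdot y)(f(j)) = x(f(j))\cdot y(f(j)).
$$
Since the two expressions agree for every $j\in J$, the maps $(x\circ f)\cdot(y\circ f)$ and $(x\cdot y)\circ f$ coincide as elements of $G^J$, which is the claimed equality.

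I do not expect any genuine obstacle here: no associativity or any other groupoid axiom is used, and the argument is a pure bookkeeping exercise in the composition/power notation fixed just above. The only point requiring a modicum of care is keeping the direction of $f\colon J\to I$ straight so that the composites $x\circ f,\ y\circ f$ indeed lie in $G^J$; once that is in place the verification is immediate. If desired, one could alternatively phrase the whole thing as the observation that for each $j$ the map $u\mapsto u(f(j))$ is a groupoid homomorphism $G^I\to\mathbf{G}$, but the direct pointwise check is shorter and self-contained.
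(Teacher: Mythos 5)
Your pointwise verification is correct and is exactly the expected argument; the paper omits a proof entirely, treating the proposition as immediate, and your computation (together with the remark that no groupoid axioms are needed, only the pointwise product on powers) is precisely what that omission presupposes.
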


We will frequently use parameterised systems of maps. In order to
distinguish easily between parameters and arguments, we will put the parameters
in square brackets, so $f[a,b](x)$ will denote the value of a
map $f[a,b]$ on the argument $x$.

We will also frequently pass between algebras (semigroups), categories, and
other types of structures (systems of maps), typically related to one
another. To help distinguishing between them, we will use different fonts.
Typically, boldface will be used for algebras (and italics for their
universes), sans serif will be used for categories, and script for other types
of structures. A few exceptions to these rules will be natural enough not to
cause confusion.

\subsection{$\lambda\rho$-systems and $\lambda\rho$-products}

Let $\mathbf{S}$ be a semigroup. We will write $S$ for the universe of
$\mathbf{S}$, and use this convention systematically from now on. Let 
$(I[s])_{s\in S}$ be a system of sets indexed by the elements of $S$.
For any $(a,b)\in S^2$, let 
$\lambda[a,b]\colon I[ab]\to I[a]$ and $\rho[a,b]\colon I[ab]\to I[b]$
be maps satisfying the following conditions
\begin{enumerate}
\item[($\alpha$)] $\lambda[a,b]\circ\lambda[ab,c] = \lambda[a,bc]$
\item[($\beta$)] $\rho[b,c]\circ\rho[a,bc] = \rho[ab,c]$
\item[($\gamma$)] $\rho[a,b]\circ\lambda[ab,c] = \lambda[b,c]\circ\rho[a,bc]$
\end{enumerate}
which make the diagram in Figure~\ref{l-r-system} commute. 
\begin{figure}
\begin{tikzpicture}[>=stealth,auto]
\node (I_abc) at (0,0) {$I[abc]$};
\node (I_a) at (-4,0) {$I[a]$};
\node (I_c) at (4,0) {$I[c]$};
\node (I_ab) at (-2,-2) {$I[ab]$};
\node (I_bc) at (2,-2) {$I[bc]$};
\node (I_b) at (0,-4) {$I[b]$};
\draw[->] (I_abc) to node[swap] {$\lambda[a,bc]$} (I_a);
\draw[->] (I_abc) to node {$\rho[ab,c]$} (I_c);
\draw[->] (I_abc) to node[swap] {$\lambda[ab,c]$} (I_ab);
\draw[->] (I_abc) to node {$\rho[a,bc]$} (I_bc);
\draw[->] (I_ab) to node {$\lambda[a,b]$} (I_a);
\draw[->] (I_ab) to node[swap] {$\rho[a,b]$} (I_b);
\draw[->] (I_bc) to node[swap] {$\rho[b,c]$} (I_c);
\draw[->] (I_bc) to node {$\lambda[b,c]$} (I_b);
\end{tikzpicture}
\caption{A $\lambda\rho$-system}
\label{l-r-system}
\end{figure}
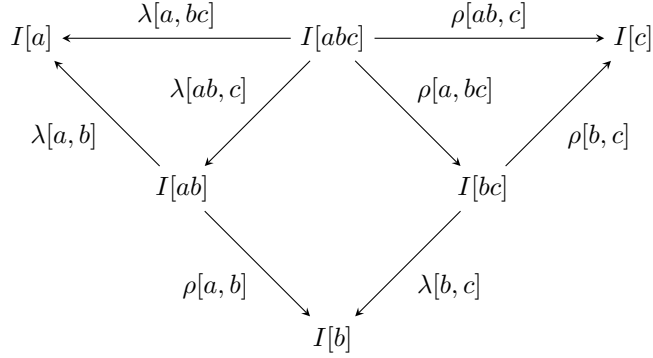
Let $\mathbf{S}$ be a semigroup, and let $\mathbf{I} =  (I[s])_{s\in S}$,
$\blambda = (\lambda[a,b]\colon I[ab]\to I[a])_{(a,b)\in S\times S}$ and
$\brho = (\rho[a,b]\colon I[ab]\to I[a])_{(a,b)\in S\times S}$
be a system of sets and maps satisfying the conditions above. 
We will call the triple $(\mathbf{I},\blambda,\brho)$ 
a \emph{$\lambda\rho$-system over} $\mathbf{S}$. A \emph{general
$\lambda\rho$-system} is then a pair $(\mathbf{S},\mathcal{S})$,
where $\mathbf{S}$ is a semigroup and $\mathcal{S}$
is a $\lambda\rho$-system over $\mathbf{S}$. 
We will typically use script letters to refer to $\lambda\rho$-systems,
together with the convention that a $\lambda\rho$-system over a semigroup will
be referred to by the script variant of the letter naming the semigroup.
Thus, a $\lambda\rho$-system over $\mathbf{S}$ will be generally called
$\mathcal{S}$; subscripts, and occasionally other devices, will be used to
distinguish between different  
$\lambda\rho$-systems over the same semigroup.
Where convenient, we will also use the more explicit notation
$$
\bigl(\langle \lambda[a,b],\rho[a,b]\rangle\colon
I[ab]\longrightarrow I[a]\times I[b]\bigr)_{(a,b)\in S^2}
$$
for a $\lambda\rho$-system over a semigroup $\mathbf{S}$.

If $\mathbf{S}$ is the trivial semigroup, then 
any $\lambda\rho$-system over $\mathbf{S}$ is a 
pair of commuting retractions on some set. 
Such $\lambda\rho$-systems were studied in~\cite{Bot21}
under the name of $\lambda\rho$-algebras, giving representations of certain
semigroups.

\begin{definition}\label{rl-prod}
Let $\mathbf{S}$ be a semigroup and let
$$
\mathcal{S} =   
\bigl(\langle\lambda[a,b],\rho[a,b]\rangle\colon 
I[ab]\to I[a]\times I[b]\bigr)_{(a,b)\in S^2}
$$
be a system of sets and maps indexed by the elements of $S^2$.
Let $\mathbf{H}$ be a semigroup. 
We define a groupoid  
$\mathbf{H}^{[\mathcal{S}]} = (H ^{[\mathcal{S}]};\star)$, by putting 
\begin{itemize}
\item $H^{[\mathcal{S}]} = \biguplus_{a\in S} H^{I[a]} = \{(x,a)\colon a\in S,\
  x\in H^{I[a]}\}$, and 
\item $(x,a)\star(y,b) = \bigl((x\circ\lambda[a,b])\cdot(y\circ\rho[a,b]),ab\bigr)$.
\end{itemize}
We call $\mathbf{H}^{[\mathcal{S}]}$ a \emph{$\lambda\rho$-product}.
\end{definition}

\begin{example}\label{skel}
Let $\mathbf{S}$ be a semigroup, and let $\mathbf{1}$ be the trivial semigroup.
Then, for any system $\mathcal{S}$ of sets and maps over $\mathbf{S}$ we have
$\mathbf{1}^{[\mathcal{S}]}\cong \mathbf{S}$. Indeed, $\mathbf{1}^I\cong
\mathbf{1}$ for any $I$, so
$\mathbf{1}^{[\mathcal{S}]} = \bigl(\{(1,s)\colon s\in S\},\star\bigr)$, with
$(1,a)\star(1,b) = (1,ab)$.  
\end{example}

The operation $\star$ in $\mathbf{1}^{[\mathcal{S}]}$ is associative only because  
$\mathbf{1}$ is the trivial semigroup. For an arbitrary semigroup 
$\mathbf{H}$, associativity of $\star$ in $\mathbf{1}^{[\mathcal{S}]}$
is equivalent to $\mathcal{S}$ being a $\lambda\rho$-system as we will now show.  

\begin{theorem}\label{main}
Let $\mathbf{S}$ be a semigroup and let
$$
\mathcal{S} = 
\bigl(\langle\lambda[a,b],\rho[a,b]\rangle\colon 
I[ab]\to I[a]\times I[b]\bigr)_{(a,b)\in S^2}
$$
be a system of sets and maps indexed by the elements of $S^2$. 
Then, the following are equivalent.
\begin{enumerate}
\item $\mathbf{H}^{[\mathcal{S}]}$ is a semigroup, for any semigroup $\mathbf{H}$.
\item $\bigl(\langle\lambda[a,b],\rho[a,b]\rangle\colon 
I[ab]\to I[a]\times I[b]\bigr)_{(a,b)\in S^2}$ is a $\lambda\rho$-system over $\mathbf{S}$.
\end{enumerate}
\end{theorem}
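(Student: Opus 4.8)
The plan is to prove the equivalence by a direct computation of the two ways of associating a triple product $(x,a)\star(y,b)\star(z,c)$ in $\mathbf H^{[\mathcal S]}$, and then extracting from the resulting identity exactly the three conditions $(\alpha)$, $(\beta)$, $(\gamma)$. The implication $(2)\Rightarrow(1)$ is the routine half: assuming $\mathcal S$ is a $\lambda\rho$-system, I would expand
$$
\bigl((x,a)\star(y,b)\bigr)\star(z,c)
= \Bigl(\bigl((x\circ\lambda[a,b])\cdot(y\circ\rho[a,b])\bigr)\circ\lambda[ab,c]\cdot(z\circ\rho[ab,c]),\,(ab)c\Bigr),
$$
then use Proposition~\ref{P1} to distribute the $\circ\,\lambda[ab,c]$ over the product, turning the first coordinate into
$(x\circ\lambda[a,b]\circ\lambda[ab,c])\cdot(y\circ\rho[a,b]\circ\lambda[ab,c])\cdot(z\circ\rho[ab,c])$.
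The same expansion of $(x,a)\star\bigl((y,b)\star(z,c)\bigr)$ gives
$(x\circ\lambda[a,bc])\cdot(y\circ\lambda[b,c]\circ\rho[a,bc])\cdot(z\circ\rho[b,c]\circ\rho[a,bc])$,
in the semigroup $\mathbf S$ the second coordinates agree since $\mathbf S$ is associative, and in the first coordinate the three factors match term-by-term precisely by $(\alpha)$, $(\gamma)$, $(\beta)$ respectively; associativity of $\mathbf H$ (hence of pointwise multiplication in $H^{I[(ab)c]}$) then finishes it.

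For $(1)\Rightarrow(2)$ I would contrapose in spirit but argue directly: assume $\mathbf H^{[\mathcal S]}$ is a semigroup for \emph{every} semigroup $\mathbf H$, and I get to choose $\mathbf H$ conveniently. The key point is that the first coordinates of the two triple products, as displayed above, must be equal as elements of $H^{I[(ab)c]}$ for all choices of $x\in H^{I[a]}$, $y\in H^{I[b]}$, $z\in H^{I[c]}$. Taking $\mathbf H$ to be a free semigroup (say the free semigroup on a countably infinite set, or more economically on three generators) and choosing $x$, $y$, $z$ to be suitable injective or ``generic'' tuples lets me read off each of the three factor-equalities separately: comparing the $x$-dependence forces $\lambda[a,bc]=\lambda[a,b]\circ\lambda[ab,c]$, i.e.\ $(\alpha)$; comparing the $z$-dependence forces $\rho[ab,c]=\rho[b,c]\circ\rho[a,bc]$, i.e.\ $(\beta)$; and comparing the $y$-dependence forces $\rho[a,b]\circ\lambda[ab,c]=\lambda[b,c]\circ\rho[a,bc]$, i.e.\ $(\gamma)$. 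Concretely, to isolate, say, $(\alpha)$: fix $i\in I[(ab)c]$, choose $y$ and $z$ to be constant tuples equal to the identity if $\mathbf H$ is a monoid, or more carefully in the plain semigroup case pick $\mathbf H$ with a fixed element and tuples so that the $y$- and $z$-contributions are ``cancellable'' on the appropriate side; then the identity at coordinate $i$ collapses to $x(\lambda[a,b](\lambda[ab,c](i)))=x(\lambda[a,bc](i))$, and letting $x$ be injective on $I[a]$ (possible since $H$ can be taken large) yields $\lambda[a,b](\lambda[ab,c](i))=\lambda[a,bc](i)$. The analogous manipulations at the right end and in the middle give $(\beta)$ and $(\gamma)$.

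The main obstacle is the bookkeeping in the $(1)\Rightarrow(2)$ direction in the bare semigroup (non-monoid) setting: without an identity element one cannot simply ``kill'' the $y$ and $z$ factors, so one must instead exploit that in a free semigroup the map $h\mapsto u\cdot h\cdot v$ is injective for fixed words $u,v$, and that the three factors $(x\circ\cdots)$, $(y\circ\cdots)$, $(z\circ\cdots)$ land in disjoint ``letter alphabets'' if $x$, $y$, $z$ are chosen from disjoint generating sets, so that the word-equality in the free semigroup forces factor-by-factor equality of the three segments. Making this segmentation argument precise — essentially, that in $F_X$ a product of words over disjoint alphabets determines each word — is the one place where I would be careful; everything else is an application of Proposition~\ref{P1} and the associativity of $\mathbf S$ and of pointwise multiplication. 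I would also remark that choosing $\mathbf H$ to be a single sufficiently large free semigroup suffices, so the quantifier ``for any semigroup $\mathbf H$'' in (1) can be weakened to ``for some $\mathbf H$ with at least two generators'', which slightly sharpens the statement.
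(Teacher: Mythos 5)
Your proposal is correct, and its skeleton is the same as the paper's: reduce associativity of $\star$ to the identity between the two bracketings of a triple product, prove (2)$\Rightarrow$(1) by Proposition~\ref{P1} plus ($\alpha$), ($\beta$), ($\gamma$), and prove (1)$\Rightarrow$(2) by instantiating $\mathbf{H}$ with a free object and special test elements. Where you genuinely diverge is the gadget for (1)$\Rightarrow$(2). The paper takes the free \emph{monoid} $H^*$ on $H=\biguplus_{a\in S}I[a]$ and tests with $(id_a,a)$, $(\varepsilon_b,b)$, $(\varepsilon_c,c)$, where $\varepsilon$ is the constant map onto the empty word: the empty word annihilates two of the three factors, so each of ($\alpha$), ($\beta$), ($\gamma$) is isolated by a separate placement of $id$, and the only cancellation needed is injectivity of $id_a$; one fixed $H^*$ works uniformly for all triples $(a,b,c)$ and all cardinalities, and no combinatorics on words is needed. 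Your variant, a free semigroup with $x,y,z$ injective into pairwise disjoint alphabets, buys all three identities from a single product computation, and the segmentation step you flag is indeed unproblematic (if $x,y,z$ are letter-valued, each side is literally a three-letter word; in general a concatenation of nonempty words over pairwise disjoint alphabets determines its factors), provided you take, for the triple at hand, a free semigroup on a set at least as large as $I[a]\uplus I[b]\uplus I[c]$ --- which the universal quantifier in (1) permits. Only your closing sharpening should be tempered: ``for some $\mathbf{H}$ with at least two generators'' is false for arbitrary $\mathbf{H}$ (a two-element left-zero semigroup makes $\star$ associative whenever ($\alpha$) alone holds, so it never detects ($\beta$) or ($\gamma$)), and for a rank-two \emph{free} semigroup your disjoint-alphabet argument as written needs either a coding trick or a pointwise two-element separation argument (using cancellativity of free semigroups) to cover uncountable index sets; with that extra care the remark does survive, but it is not immediate from what you wrote.
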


\begin{proof} First, note that associativity of 
  $\star$ is equivalent to the statement that the equality
\begin{equation}\label{assoc-eq}
\begin{split}
&\bigl((x\circ\lambda[a,b]\circ\lambda[ab,c])
   \cdot(y\circ\rho[a,b]\circ\lambda[ab,c]) 
   \cdot(z\circ\rho[ab,c]),\ abc\bigr) = \\
&\bigl((x\circ\lambda[a,bc])
   \cdot(y\circ\lambda[b,c]\circ\rho[a,bc]) 
   \cdot(z\circ\rho[b,c]\circ\rho[a,bc]),\ abc\bigr)
\end{split}
\end{equation} 
holds for arbitrary $(x,a), (y,b), (z,c)\in H^{[\mathcal{S}]}$. 
To see it, we carry out the following straightforward calculation:
\begin{align*}
\bigl((x,a)\star(y,b)\bigr)&\star(z,c) = 
\bigl((x\circ\lambda[a,b])\cdot(y\circ\rho[a,b]),\ ab\bigr)\star (z,c) \\
&= \biggl(\Bigl(\bigl((x\circ\lambda[a,b])\cdot(y\circ\rho[a,b])\bigr)
   \circ\lambda[ab,c]\Bigr) 
   \cdot\bigl(z\circ\rho[ab,c]\bigr),\ abc\biggr) \\
&= \bigl((x\circ\lambda[a,b]\circ\lambda[ab,c])
   \cdot(y\circ\rho[a,b]\circ\lambda[ab,c]) 
   \cdot(z\circ\rho[ab,c]),\ abc\bigr) \\
&= \bigl((x\circ\lambda[a,bc])
   \cdot(y\circ\lambda[b,c]\circ\rho[a,bc]) 
   \cdot(z\circ\rho[b,c]\circ\rho[a,bc]),\ abc\bigr)\\
&= \biggl(\bigl(x\circ\lambda[a,bc]\bigr)
   \cdot\Bigl(\bigl((y\circ\lambda[b,c]) 
   \cdot(z\circ\rho[b,c])\bigr)\circ\rho[a,bc]\Bigr),\ abc\biggr) \\
&= (x,a)\star
   \bigl((y\circ\lambda[b,c])\cdot(z\circ\rho[b,c]),\ bc\bigr)\\
&= (x,a)\star\bigl((y,b)\star(z,c)\bigr)  
\end{align*}
where the only non-definitional equality is precisely~(\ref{assoc-eq}).
Now, if $\mathcal{S}$ is a  $\lambda\rho$-system, then~(\ref{assoc-eq}) 
follows immediately from the equations ($\alpha$), ($\beta$) and ($\gamma$).
This proves that (2) implies (1).

For the converse, let $\mathbf{S}$ be a semigroup and let
$\mathcal{S}$ be a system of sets and maps over $\mathbf{S}$.
Let $H = \biguplus (I[a])_{a\in S}$ 
and take the free monoid $H^*$. Let
$id_a\colon I[a] \to H^*$ be the identity map on $I[a]$,
and let $\varepsilon_a\colon I[a] \to H^*$ be the constant map mapping
every element of $I[a]$ to the empty string. Then, we have
$(id_a,a), (\varepsilon_b,b), (\varepsilon_c,c)\in 
(H^*)^{[\mathcal{S}]}$. Calculating products in $(H^*)^{[\mathcal{S}]}$ gives:
\begin{align*}
\bigl((id_a,a)\star (\varepsilon_b,b)\bigr)\star (\varepsilon_c,c)
 &=  \bigl((id_a\circ \lambda[a,b])\cdot(\varepsilon_b\circ\rho[a,b]), ab\bigr)\star (\varepsilon_c,c)\\
  &=  \bigl((id_a\circ \lambda[a,b]), ab\bigr)\star (\varepsilon_c,c)\\
 &=  (id_a\circ \lambda[a,b]\circ\lambda[ab,c], abc)
\end{align*}  
and 
\begin{align*}
(id_a,a)\star \bigl((\varepsilon_b,b)\star (\varepsilon_c,c)\bigr)
 &=  (id_a,a)\star\bigl((\varepsilon_b\circ\lambda[b,c])\cdot(\varepsilon_c\circ\rho[b,c]), bc\bigr)\\
 &=  (id_a\circ \lambda[a,bc],abc)
\end{align*}
The left-hand sides are identical by the assumption that
$(H^*)^{[\mathcal{S}]}$ is a semigroup, so equating the 
right-hand sides we obtain
$$ 
id_a\circ \lambda[a,b]\circ\lambda[ab,c] = id_a\circ \lambda[a,bc]
$$
but as $id_a$ is injective, it can  be cancelled, giving
$$ 
\lambda[a,b]\circ\lambda[ab,c] = \lambda[a,bc]
$$
which shows that ($\alpha$) holds. Proofs of ($\beta$) and ($\gamma$) are
analogous. For ($\beta$) calculate
$(\varepsilon_a,a)\star (\varepsilon_b,b)\star (id_c,c)$ in two ways; for ($\gamma$)
calculate $(\varepsilon_a,a)\star (id_b,b)\star (\varepsilon_c,c)$ in two ways. 
\end{proof}

Note that in general neither
$\mathbf{S}$ nor $\mathbf{H}$ is a subsemigroup of 
$\mathbf{H}^{[\mathcal{S}]}$. However, it is not difficult to show that
if $\mathbf{H}$ has an idempotent, then $\mathbf{S}$ is a subsemigroup of
$\mathbf{H}^{[\mathcal{S}]}$, and if $\mathbf{S}$ has an idempotent $e$ such
that $I[e]\neq\emptyset$, then $\mathbf{H}$ is a subsemigroup of 
$\mathbf{H}^{[\mathcal{S}]}$.

\begin{example}\label{empty}
Let $\mathbf{S}$ be a semigroup, and let $I[s] = \emptyset$ for each $s\in S$.
Then, $\mathcal{S} = (\mathbf{I},\blambda,\brho)$, where $\lambda[a,b]$,
$\rho[a,b]$ are empty functions for each $(a,b)\in S^2$, is a
$\lambda\rho$-system over $\mathbf{S}$.  
For any semigroup $\mathbf{H}$ we then have that $H^{I[s]}$ is a singleton for each
$s\in S$ \textup{(}its only element is the empty map
$\emptyset\colon \emptyset\to H$\textup{)}. 
Moreover, $(\emptyset,a)\star(\emptyset,b) = (\emptyset, ab)$, for
any $a,b\in S$, and thus $\mathbf{H}^{[\mathcal{S}]}\cong \mathbf{S}$. 
\end{example}  

One can ask how much freedom there is for making some but not necessarily all
sets $I[s]$ empty. The answer, whose easy proof we leave to the reader,
is below.

\begin{proposition}\label{ideal}
Let $\mathcal{S} = (\mathbf{I},\blambda,\brho)$ be a $\lambda\rho$-system over a
semigroup $\mathbf{S}$. Let $J = \{s\in S\colon I[s] = \emptyset\}$.
If $J$ is nonempty, then $J$ is a two-sided ideal of\/ $\mathbf{S}$.
\end{proposition}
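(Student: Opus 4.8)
The plan is to unwind the definition of a two-sided ideal and then to exploit the fact that a $\lambda\rho$-system furnishes, for every pair $(a,b)\in S^2$, genuine functions $\lambda[a,b]\colon I[ab]\to I[a]$ and $\rho[a,b]\colon I[ab]\to I[b]$. Recall that $J\subseteq S$ is a two-sided ideal exactly when $st\in J$ and $ts\in J$ for all $s\in J$ and $t\in S$. Since membership in $J$ means precisely that the corresponding set of the system is empty, it suffices to prove the following: if $I[s]=\emptyset$, then $I[st]=\emptyset$ and $I[ts]=\emptyset$ for every $t\in S$.

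So I would fix $s\in J$ and $t\in S$. For the product $st$, I would invoke the map $\lambda[s,t]\colon I[st]\to I[s]$ belonging to $\blambda$; since its codomain $I[s]$ is empty and no function can map a nonempty set into $\emptyset$, the domain $I[st]$ is forced to be empty, so $st\in J$. For the product $ts$, I would argue symmetrically using $\rho[t,s]\colon I[ts]\to I[s]$ from $\brho$: the codomain is again $I[s]=\emptyset$, hence $I[ts]=\emptyset$ and $ts\in J$. This establishes $SJ\cup JS\subseteq J$, so $J$ is a two-sided ideal; the hypothesis that $J$ be nonempty is needed only to conform to conventions under which the empty set is not counted as an ideal.

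There is essentially no obstacle here. The single point that must be stated explicitly is the set-theoretic triviality that the only set admitting a function into $\emptyset$ is $\emptyset$ itself; beyond this, the argument uses nothing about the coherence equations ($\alpha$), ($\beta$), ($\gamma$), and nothing about associativity in $\mathbf{S}$ — only the declared typing of the maps in $\blambda$ and $\brho$.
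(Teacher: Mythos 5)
Your argument is correct and is exactly the intended one: the paper leaves this proof to the reader, and the natural argument is precisely yours, namely that the declared typing $\lambda[s,t]\colon I[st]\to I[s]$ and $\rho[t,s]\colon I[ts]\to I[s]$ forces $I[st]=I[ts]=\emptyset$ whenever $I[s]=\emptyset$, with no use of ($\alpha$), ($\beta$), ($\gamma$). Your remark that nonemptiness of $J$ is only a convention is also accurate.
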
  

\begin{example}\label{prod}
Let $\mathbf{S}$ be a semigroup, and let $I[s] = \{1\}$ for each $s\in S$.
Then, $\mathcal{S} = (\mathbf{I},\blambda,\brho)$, where $\lambda[a,b]$,
$\rho[a,b]$ are constant functions for each $(a,b)\in S^2$, is a
$\lambda\rho$-system over $\mathbf{S}$.  
Then, $H^{I[s]}$ is a copy of $H$, for any semigroup $\mathbf{H}$. 
Moreover, for 
any $a,b\in S$ and $x,y\in H$, we have 
$(x,a)\star(y,b) = (xy, ab)$,
and thus $\mathbf{H}^{[\mathcal{S}]}\cong \mathbf{H}\times\mathbf{S}$. 
\end{example}

\begin{example}\label{lzero}
Let $\mathbf{1}$ be the trivial semigroup, and let 
$I = \{0,1\}$. Next, let $\lambda\colon I\to I$ be the identity map,
and let $\rho\colon I\to I$ be the constant map $\overline{0}$.
This defines a $\lambda\rho$-system $\mathcal{I}$ over $\mathbf{1}$.
Consider $\mathbb{Z}_2^{[\mathcal{I}]}$, whose universe
$\mathbb{Z}_2^I$ we will identify in the obvious way with
the set $\{00,01,10,11\}$. Here is the multiplication table of
$\mathbb{Z}_2^{[\mathcal{I}]}$:
$$
\begin{tabular}{c|cccc}
$\star$ & $00$  & $11$ & $01$ & $10$ \\
\hline                            
$00$  & $00$  & $11$ & $00$ & $11$ \\
$11$  & $11$  & $00$ & $11$ & $00$ \\  
$01$  & $01$  & $10$ & $01$ & $10$ \\
$10$  & $10$  & $01$ & $10$ & $01$ \\
\end{tabular}  
$$
Partitioning the universe into $\{00,11\}$ and $\{01,10\}$, we
obtain a congruence $\theta$, such that 
$\mathbb{Z}_2^{[\mathcal{I}]}/\theta$ is isomorphic to the two-element left-zero semigroup.
\end{example}

\begin{example}\label{flip-flop}
Let $\mathbf{2}=(\{0,1\},\vee)$ be the two-element join-semilattice, and let
$\mathcal{Z}$ be the $\lambda\rho$-system over $\mathbf{2}$, defined by putting
\begin{enumerate}
\item $I[0]=\{0\},$ $I[1]=\{0,1\}$,
\item $\lambda[1,0]=\rho[0,1]=\lambda[1,1]=id_{I[1]}$ and
  $\rho[1,1] = \overline{0}$. 
\end{enumerate}
This defines a unique $\lambda\rho$-system, since 
the remaining maps all have range $\{0\}$. It is
easy to show that the semigroup $\mathbb Z_2^{[\mathcal{Z}]}$ is the following:
$$
\begin{array}{c|cccccc}
\star &0&1&00&11&01&10\\
\hline
0&0&1&00&11&01&10\\
1&1&0&11&00&10&01\\
00&00&11&00&11&00&11\\
11&11&00&11&00&11&00\\
01&01&10&01&10&01&10\\
10&10&01&10&01&10&01
\end{array}
$$
Partitioning the universe into $\{0,1\}$, $\{00,11\}$ and $\{01,10\}$
we obtain a congruence $\theta$, such that $\mathbb Z_2^{[\mathcal{Z}]}/\theta$ is
isomorphic to the left flip-flop monoid $L_2^1$. 
\end{example}

In the commonly used terminology, Examples~\ref{lzero} and~\ref{flip-flop} show,
respectively, that the two-element left-zero semigroup $L_2$ strongly divides
a $\lambda\rho$-product of $\mathbb{Z}_2$ over the trivial semigroup, and
the three-element left flip-flop monoid $L_2^1$ strongly divides 
a $\lambda\rho$-product of $\mathbb{Z}_2$ over a two-element semilattice.
In this sense, $L_2^1$ turns out to be decomposable.

The next proposition shows that $\lambda\rho$-products generalise
wreath products and block products. As there
are a number of slightly different versions of wreath products and block
products for semigroups, we will state the definitions we use.

Let a semigroup $\mathbf{S}$ act on a set $X$ on the right, and let $\mathbf{H}$
be any semigroup. The wreath product $\mathbf{H}\wr (X,\mathbf{S})$ is
a semidirect product $\mathbf{H}^X\rtimes  \mathbf{S}$ with multiplication
defined by $(u, a)\star(w,b) = (u\cdot (w\circ(\bl \ast a)),\ ab)$, where
$\cdot$ is the multiplication in $\mathbf{H}^X$ and $\ast$ is the right action
of $\mathbf{S}$ on $X$.

A two-sided action of a 
semigroup $\mathbf{S}$ on a set $X$ is a pair of maps $\ld\colon S\times X\to X$ and
$\rd\colon X\times S\to X$, satisfying 
$$
a\ld (b\ld x) = (a\cdot b)\ld x\qquad (x\rd a)\rd b = x\rd (a\cdot b)\qquad
(a\ld x)\rd b=a\ld (x\rd b)
$$
for any $a,b\in S$  and $x\in X$. The two-sided wreath product
$\mathbf{H}\wr (X,\mathbf{S},X)$ is a semidirect product
$\mathbf{H}^X\bowtie \mathbf{S}$, with multiplication defined by
$(u, a)\star(w,b) = ((u\circ(b\ld\bl)) \cdot (w\circ(\bl\rd a)),\ ab)$.

Any semigroup $\mathbf{N}$ has a natural
two-sided action on $N^2$, given by $n\ld(n_1,n_2) = (nn_1, n_2)$ and
$(n_1,n_2)\rd n = (n_1,n_2n)$. The block product
$\mathbf{H}\mathbin{\Box}\mathbf{N}$ is the two-sided wreath product
$\mathbf{H}\wr (N^2,\mathbf{N},N^2)$ with
respect to the natural two-sided action of $\mathbf{N}$ on $N^2$.

\begin{proposition}\label{action-wreath-block}
Let $(X,\ld,\rd,\mathbf{S})$ consist of a set
$X$ together with a two-sided 
action of a semigroup $\mathbf{S}$ on $X$. Then the system of maps
$$
\mathcal{S}(X,\mathbf{S},X) =  \bigl(\langle \lambda[a,b],\rho[a,b]\rangle\colon
I[ab]\to I[a]\times I[b]\bigr),
$$
where $I[s]=X$ for any $s\in S$, and 
\begin{enumerate}
\item $\lambda[a,b] = b\ld\bl$\; for any $a,b\in S$,
\item $\rho[a,b] = \bl\rd a$\; for all $a,b\in S$. 
\end{enumerate}
is a $\lambda\rho$-system over $\mathbf{S}$. Moreover, for any semigroup
$\mathbf{H}$, the $\lambda\rho$-product
$\mathbf{H}^{[\mathcal{S}(X,\mathbf{S},X)]}$ is isomorphic 
to the two-sided wreath product of\/ $\mathbf{H}$ by
$\mathbf{S}$
\end{proposition}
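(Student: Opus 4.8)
The plan is to verify the two claims in turn, both by direct computation, exploiting the fact that $I[s]=X$ for every $s$ so that all the maps $\lambda[a,b]$ and $\rho[a,b]$ are endomaps of $X$ determined by the two-sided action. First I would check that $\mathcal{S}(X,\mathbf{S},X)$ satisfies conditions ($\alpha$), ($\beta$), ($\gamma$). For ($\alpha$) I compute $\lambda[a,b]\circ\lambda[ab,c] = (b\ld\bl)\circ(c\ld\bl)$, which sends $x$ to $b\ld(c\ld x) = (bc)\ld x = \lambda[a,bc](x)$ by the first action axiom; ($\beta$) is the mirror-image computation using the second action axiom; and ($\gamma$) unwinds to $\rho[a,b]\circ\lambda[ab,c]$ sending $x$ to $(c\ld x)\rd a$ versus $\lambda[b,c]\circ\rho[a,bc]$ sending $x$ to $c\ld(x\rd a)$, which agree by the third (compatibility) action axiom. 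So by Theorem~\ref{main} (or directly), $\mathcal{S}(X,\mathbf{S},X)$ is a $\lambda\rho$-system and $\mathbf{H}^{[\mathcal{S}(X,\mathbf{S},X)]}$ is a semigroup.

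Next I would exhibit the isomorphism with $\mathbf{H}\wr(X,\mathbf{S},X) = \mathbf{H}^X\bowtie\mathbf{S}$. The underlying sets match on the nose: $H^{[\mathcal{S}(X,\mathbf{S},X)]} = \biguplus_{a\in S} H^{I[a]} = \biguplus_{a\in S} H^X = H^X\times S$, which is exactly the underlying set of the two-sided wreath product. So the natural candidate for the isomorphism is the identity map $\varphi(x,a) = (x,a)$, and it remains only to check it respects the two multiplications. In the $\lambda\rho$-product, $(x,a)\star(y,b) = \bigl((x\circ\lambda[a,b])\cdot(y\circ\rho[a,b]),\,ab\bigr) = \bigl((x\circ(b\ld\bl))\cdot(y\circ(\bl\rd a)),\,ab\bigr)$, and this is verbatim the definition of $(x,a)\star(y,b)$ in $\mathbf{H}\wr(X,\mathbf{S},X)$ given just above the proposition. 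So $\varphi$ is a bijective homomorphism.

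There is essentially no hard step here: once the dictionary $I[s]=X$, $\lambda[a,b]=b\ld\bl$, $\rho[a,b]=\bl\rd a$ is in place, the three action axioms are precisely ($\alpha$), ($\beta$), ($\gamma$), and the two product formulas are literally the same expression. The only point deserving a word of care is bookkeeping with the disjoint-union indexing: one should note that although as a set $\biguplus_{a\in S}H^X$ is $H^X\times S$, the element $(x,a)$ must be read with its tag $a$ retained, so that the product lands in the $ab$-component; this is automatic from Definition~\ref{rl-prod}. I would also remark in passing that, specialising to the natural two-sided action of a semigroup $\mathbf{N}$ on $N^2$, the same computation shows $\mathbf{H}^{[\mathcal{S}(N^2,\mathbf{N},N^2)]}\cong\mathbf{H}\mathbin{\Box}\mathbf{N}$, so block products are $\lambda\rho$-products as well, and the one-sided wreath product $\mathbf{H}\wr(X,\mathbf{S})$ arises analogously by taking $\lambda[a,b]=id_X$ and $\rho[a,b]=\bl\ast a$.
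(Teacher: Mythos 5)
Your verification is correct and matches what the paper intends: the proposition is stated without proof precisely because, once one sets $I[s]=X$, $\lambda[a,b]=b\ld\bl$, $\rho[a,b]=\bl\rd a$, the three two-sided action axioms are exactly ($\alpha$), ($\beta$), ($\gamma$), and the $\star$-formula in Definition~\ref{rl-prod} coincides verbatim with the multiplication of $\mathbf{H}^X\bowtie\mathbf{S}$, so the identity map is the isomorphism. Your write-up is this routine check carried out explicitly, with the same specialisations to the one-sided wreath product and the block product that the paper records right after the proposition.
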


Taking $\ld$ to be the second projection,
and $\rd$ to be the right action of $\mathbf{S}$ on $X$,
we get that $\mathbf{H}^{[\mathcal{S}(X,\mathbf{S},X)]}$ is
isomorphic to the wreath product $\mathbf{H}\wr (X,\mathbf{S})$.
In the more usual notation, with $\ast$ replacing $\rd$,
the explicit definitions of $\lambda$ and $\rho$ become   
\begin{enumerate}
\item $\lambda[a,b] = {id}_X$ for all $a,b\in S$,
\item $\rho[a,b] = \bl\ast a$ for all $a,b\in S$. 
\end{enumerate}
We will denote the
resulting $\lambda\rho$-system by $\mathcal{S}(X,\mathbf{S})$.
It will be particularly important in Section~\ref{wreath}.
(Analogously, we can define a $\lambda\rho$-system
$\mathcal{S}(\mathbf{S}, X)$ starting from a left action of $\mathbf{S}$ on $X$.) 

Taking $(S^2,\ld,\rd,\mathbf{S})$ to be the natural two-sided action
of $\mathbf{S}$ on $S^2$ we get that
$\mathbf{H}^{[\mathcal{S}(S^2,\mathbf{S},S^2)]}$ is isomorphic  
to the block product $\mathbf{H}\mathbin{\Box}\mathbf{S}$.

Note that in Proposition~\ref{action-wreath-block}
letting $\mathbf{S}$ be the trivial semigroup 
and $|X| = 1$ results in $\mathbf{H}^{[\mathcal{S}]}$
being isomorphic to $\mathbf{H}$, for any semigroup
$\mathbf{H}$. Example~\ref{lzero} shows that this is not the case 
for arbitrary $\lambda\rho$-products. Example~\ref{flip-flop} shows that there
are nontrivial $\lambda\rho$-products not isomorphic to nontrivial wreath
products (otherwise the flip-flop monoid would divide a nontrivial wreath product).
In general, there are nontrivial $\lambda\rho$-products not isomorphic to any
nontrivial semidirect products, as the next example shows.

\begin{example}\label{not-semidir-example}
Let $\underline{\mathbf{2}}=(\{0,1\},\wedge)$ be the two-element
meet-semilattice, and let 
$\mathcal{U}$ be the $\lambda\rho$-system over $\underline{\mathbf{2}}$, defined
by putting 
\begin{enumerate}
\item $I[0]=\emptyset$, $I[1]=\{0,1\}$,
\item $\lambda[1,1]=\rho[1,1]=id_{I[1]}$. 
\end{enumerate}
All other maps have $I[0]$ as the domain, so they are empty maps. It is easy to
check that this defines a unique $\lambda\rho$-system. Let $\mathbf{S}$ be any
two-element semigroup. Then the universe of $\mathbf{S}^{[\mathcal{U}]}$
is $S^2 \uplus S^\emptyset$ so it has 5 elements, and hence cannot be
the universe of any nontrivial semidirect product.
\end{example}
This example adds zero to $\mathbf{S}^2$ and it can be easily modified in various
ways. Taking $I[1]$ to be a singleton and an arbitrary $\mathbf{S}$ shows that
adding zero in general can be viewed as a $\lambda\rho$-product.

In contrast to the above, we will show that in the case of groups
$\lambda\rho$-products coincide with the usual wreath
products. To be precise, in the final section we show that for any $\lambda\rho$-system
$\mathcal{S}$ over a group $\mathbf{G}$, if the $\lambda\rho$-product
$\mathbf{H}^{[\mathcal{S}]}$ for any group $\mathbf{H}$ is itself a group, then
$\mathbf{H}^{[\mathcal{S}]}$ is isomorphic to a wreath product
$\mathbf{H}\wr (X,\mathbf{G})$, with $\mathbf{G}$ acting on some set $X$.

The next example comes from the theory of residuated structures.
We present it mainly because it is somewhat related to the 
kite construction that motivated the present work. The reader unfamiliar with
residuated structures can safely skip the example. The reader familiar with
residuated structures will notice that the slashes used here are the opposites of
the slashes used for the two-sided action in
Proposition~\ref{action-wreath-block}.

\begin{example}\label{rl-example}
Let $(L;\leq, \cdot,\ld,\rd)$ be a partially ordered residuated semigroup.
For each $a\in L$ we put 
$I[a] = \{u\in L\colon a\leq u\}$. Next, let $\lambda[a,b] = \bl\rd b$ and
$\rho[a,b] = a\ld\bl$. Then $(\mathbf{I},\blambda,\brho)$ is a
$\lambda\rho$-system over $(L;\cdot)$.
\end{example}

The last example in this section is hardly more than a curiosity, but we find it
quite illustrative. Let $\bullet$ be any semigroup operation on a
two-element Boolean algebra $\mathbf{B}$, say, meet, join, projection, or
addition modulo 2. Then, for any set $X$, on the one hand
$\bullet$ is a pointwise operation in $\mathbf{B}^X$, but on the
other hand, it has its \emph{alter ego} in the powerset $2^X$, via
characteristic functions. Here is an analogue of this for a
$\lambda\rho$-system over $\mathbf{B}$.
\begin{example}
Let $\mathcal{S} = (\mathbf{I},\blambda,\brho)$ be any
$\lambda\rho$-system over a semigroup $\mathbf{S}$. Let $\bullet$ be any semigroup
operation on the two-element Boolean algebra $\mathbf{B}$. Then, 
$\mathbf{B}^{[\mathcal{S}]}$ is a semigroup whose universe is 
$\biguplus\{2^{I[a]}: a\in S\}$. The semigroup operation  
can be explicitly written as
$$
(U,a)\star (W,b) = \bigl(\lambda[a,b]^{-1}(U)\bullet\rho[a,b]^{-1}(W), ab\bigr) 
$$
where $U\subseteq I[a]$ and $W\subseteq I[b]$.
\end{example}  
One may think of the preimages $\lambda[a,b]^{-1}(U)$ and $\rho[a,b]^{-1}(W)$
as shadows cast by $U$ and $W$ in a stack of Venn diagrams.

\section{An application of Grothendieck construction}\label{cats}

In this short section we use some categorical tools to show that general
$\lambda\rho$-systems form a category in a natural way. Of itself, it does not add
anything essentially new to the construction of $\lambda\rho$-systems,
it just provides a conceptualisation which
will be useful in Section~\ref{free-constr}, but perhaps
it may also prove useful in developing the theory further.
Throughout this section $\mathsf{Cat}$ will stand for the category of all
categories (with functors as arrows). For any
category $\mathsf{C}$, we will write $\mathrm{obj}(\mathsf{C})$ for the class of
objects of $\mathsf{C}$.  

\begin{definition}\label{t-over-S}
Let $\mathcal{S} = (\mathbf{I},\blambda,\brho)$ and
$\mathcal{S}'=(\mathbf{I}',\blambda',\brho')$  be
$\lambda\rho$-systems over a semigroup $\mathbf S$. We define
a \emph{morphism of $\lambda\rho$-systems}
$\mathbf{t}$ from $\mathcal{S}$ to $\mathcal{S}'$
to be a system of maps 
$\mathbf{t} = (t[a]\colon I[a]\longrightarrow I'[a])_{a\in S}$
satisfying $\lambda'[a,b]\circ t[ab] = t[a]\circ \lambda[a,b]$ and
$\rho'[a,b]\circ t[ab]= t[b]\circ \rho[a,b]$ for all $a,b\in S$, i.e., such that
the diagrams below commute. 
$$
\begin{tikzpicture}[>=stealth,auto]
\node (tab) at (0,0) {$I[ab]$};
\node (ab) at (3,0) {$I'[ab]$};
\node (ta) at (0,-2) {$I[a]$};
\node (a) at (3,-2) {$I'[a]$};
\draw[->] (tab) to node {$t[ab]$} (ab);
\draw[->] (tab) to node[swap] {$\lambda[a,b]$} (ta);
\draw[->] (ab) to node[swap] {$\lambda'[a,b]$} (a);
\draw[->] (ta) to node {$t[a]$} (a);
\end{tikzpicture} 
\qquad
\begin{tikzpicture}[>=stealth,auto]
\node (tab) at (0,0) {$I[ab]$};
\node (ab) at (3,0) {$I'[ab]$};
\node (tb) at (0,-2) {$I[b]$};
\node (b) at (3,-2) {$I'[b]$};
\draw[->] (tab) to node {$t[ab]$} (ab);
\draw[->] (tab) to node[swap] {$\rho[a,b]$} (tb);
\draw[->] (ab) to node[swap] {$\rho'[a,b]$} (b);
\draw[->] (tb) to node {$t[b]$} (b);
\end{tikzpicture} 
$$
\end{definition}

It is easy to see that $\lambda\rho$-systems over a semigroup $\mathbf{S}$ form
a category whose arrows are morphisms of $\lambda\rho$-systems.
The composition of morphisms of $\lambda\rho$-systems
(defined naturally as a system of compositions of maps) is a
morphism of $\lambda\rho$-systems, and the identity arrow is a system of identity maps.

\begin{definition}\label{cat-lr-over-S}
Let $\mathbf{S}$ be a semigroup. We define $\bm{\lambda\rho}(\mathbf{S})$ to be  
the category whose objects are $\lambda\rho$-systems over a semigroup
$\mathbf{S}$, and whose arrows are morphisms of $\lambda\rho$-systems.
\end{definition}

Having defined the category of $\lambda\rho$-systems
over a fixed semigroup, we will upgrade this definition to general 
$\lambda\rho$-systems. We will do it by means of
Grothendieck construction, whose one version we will now recall.

\begin{definition}[Grothendieck construction]
Let\/ $\mathsf{C}$ be an arbitrary category, and let
$F\colon \mathsf{C}^{op}\to\mathsf{Cat}$
be a functor. Then, $\mathsf{\Gamma}(F)$ is the category defined as follows.
\begin{enumerate}
\item Objects of\/ $\mathsf{\Gamma}(F)$ are pairs $(A,X)$ such that
$A\in \mathrm{obj}(\mathsf{C})$ and $X\in \mathrm{obj}(F(A))$. 
\item Arrows between objects $(A_1,X_1),(A_2,X_2)\in
\mathrm{obj}(\mathsf{\Gamma}(F))$ are pairs
  $(f,g)$ such that $f\colon A_2\to A_1$ is an arrow in the category
$\mathsf{C}$ and $g\colon F(f)(X_1)\to X_2$ is an arrow in the category $F(A_2)$. 
\item For objects and arrows in $\mathsf{\Gamma}(F)$, given below:
$$
(A_1,X_1)\overset{(f_1,g_1)}\longrightarrow(A_2,X_2)
\overset{(f_2,g_2)}\longrightarrow(A_3,X_3) 
$$ 
the composition of arrows is defined by:
$$
(f_2,g_2)\circ(f_1,g_1)= (f_1\circ f_2, g_2\circ F(f_2)(g_1)). 
$$
\end{enumerate}
\end{definition}

To apply Grothendieck construction to $\lambda\rho$-systems, we first show the
existence of a suitable contravariant functor from semigroups to categories.

\begin{lemma}
Let\/ $\mathsf{Sg}$ be the category of semigroups
\textup{(}with homomorphisms\textup{)}.  
There is a functor $\bm{\lambda\rho}\colon \mathsf{Sg}^{op}\to
\mathsf{Cat}$ such that $\mathbf S\mapsto \bm{\lambda\rho}(\mathbf S)$, and
for each semigroup homomorphism $f\colon \mathbf T\to\mathbf S$
we have a functor 
$$
\bm{\lambda\rho}(f)\colon \bm{\lambda\rho}(\mathbf S)\longrightarrow
\bm{\lambda\rho}(\mathbf T)
$$
such that 
\begin{enumerate}
\item If $\mathcal{S} = (\mathbf{I},\blambda,\brho)\in
\bm{\lambda\rho}(\mathbf S)$, then
$$
\bm{\lambda\rho}(f)(\mathcal{S}) =
(\bm{\lambda\rho}(f)\mathbf{I},\bm{\lambda\rho}(f)\blambda,
\bm{\lambda\rho}(f)\brho)
$$ 
where 
\begin{eqnarray*}
\bm{\lambda\rho}(f)\mathbf{I} &=& \bigl(I[f(a)]\bigr)_{a\in T},\\
\bm{\lambda\rho}(f)\blambda &=&
\bigl(\lambda[f(a),f(b)]\colon
I[f(ab)]\to I[f(a)]\bigr)_{(a,b)\in T\times T},\\
\bm{\lambda\rho}(f)\brho &=&
\bigl(\rho[f(a),f(b)]\colon
I[f(ab)]\to I[f(b)]\bigr)_{(a,b)\in T\times T}.
\end{eqnarray*}
\item For any $\lambda\rho$-systems $\mathcal{S} = (\mathbf{I},\blambda,\brho)$ and
$\mathcal{S}' = (\mathbf{I}',\blambda',\brho')$ over a semigroup
$\mathbf{S}$, and for any morphism of $\lambda\rho$-systems
$\mathbf{t}\colon \mathcal{S}\to\mathcal{S}'$, such that 
$$
\mathbf{t} = \bigl(t[a]\colon I[a]\longrightarrow I'[a]\bigr)_{a\in S}
$$
we have a morphism of $\lambda\rho$-systems
$\bm{\lambda\rho}(f)\mathbf{t}\colon
\bm{\lambda\rho}(f)(\mathcal{S})\to \bm{\lambda\rho}(f)(\mathcal{S}')$
such that
$$
\bm{\lambda\rho}(f)\mathbf{t} =
\bigl(t[f(a)]\colon I[f(a)]\longrightarrow I'[f(a)]\bigr)_{a\in T}.
$$
\end{enumerate}
\end{lemma}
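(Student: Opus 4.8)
The plan is to check, in turn, that $\bm{\lambda\rho}(f)$ takes $\lambda\rho$-systems over $\mathbf{S}$ to $\lambda\rho$-systems over $\mathbf{T}$, that it takes morphisms to morphisms and is functorial, and finally that $\bm{\lambda\rho}$ itself satisfies the two functor laws of a contravariant functor $\mathsf{Sg}\to\mathsf{Cat}$. The running observation is that precomposing an index by a homomorphism $f\colon\mathbf{T}\to\mathbf{S}$ reindexes $S$-indexed data as $T$-indexed data while leaving every equation and every commuting square intact, so essentially everything is bookkeeping. First I would verify that $\bm{\lambda\rho}(f)(\mathcal{S})$ is a $\lambda\rho$-system over $\mathbf{T}$. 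Since $f$ is a semigroup homomorphism, $f(ab)=f(a)f(b)$ for all $a,b\in T$, so the maps $\lambda[f(a),f(b)]\colon I[f(ab)]\to I[f(a)]$ and $\rho[f(a),f(b)]\colon I[f(ab)]\to I[f(b)]$ are correctly typed, whence $\bm{\lambda\rho}(f)\mathbf{I}$, $\bm{\lambda\rho}(f)\blambda$, $\bm{\lambda\rho}(f)\brho$ form a system of sets and maps of the required shape over $\mathbf{T}$. For conditions $(\alpha)$, $(\beta)$, $(\gamma)$ one fixes $a,b,c\in T$ and notes that the three equations demanded of $\bm{\lambda\rho}(f)(\mathcal{S})$ at $(a,b,c)$ are, once $f$ is substituted, literally $(\alpha)$, $(\beta)$, $(\gamma)$ for $\mathcal{S}$ at $(f(a),f(b),f(c))\in S^3$, and these hold because $\mathcal{S}$ is a $\lambda\rho$-system. (If $f$ is not injective, distinct elements of $T$ sharing an image receive identical sets and maps; this causes no difficulty.)

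Next I would check that $\bm{\lambda\rho}(f)$ is well defined on arrows and functorial. Given a morphism $\mathbf{t}=(t[a])_{a\in S}\colon\mathcal{S}\to\mathcal{S}'$, the reindexed system $(t[f(a)])_{a\in T}$ has the correct domains and codomains, and the two squares of Definition~\ref{t-over-S} for $\bm{\lambda\rho}(f)\mathbf{t}$ at $(a,b)\in T^2$ are exactly the squares for $\mathbf{t}$ at $(f(a),f(b))\in S^2$, so $\bm{\lambda\rho}(f)\mathbf{t}$ is indeed a morphism of $\lambda\rho$-systems. Preservation of identities is immediate, as $\bm{\lambda\rho}(f)$ sends the identity system $(id_{I[a]})_{a\in S}$ to $(id_{I[f(a)]})_{a\in T}$; preservation of composition reduces to $(s[f(a)]\circ t[f(a)])_{a\in T}=\bm{\lambda\rho}(f)(\mathbf{s})\circ\bm{\lambda\rho}(f)(\mathbf{t})$ for any further $\mathbf{s}\colon\mathcal{S}'\to\mathcal{S}''$. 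Hence $\bm{\lambda\rho}(f)$ is a functor $\bm{\lambda\rho}(\mathbf{S})\to\bm{\lambda\rho}(\mathbf{T})$.

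Finally I would verify the functor laws for $\bm{\lambda\rho}\colon\mathsf{Sg}^{op}\to\mathsf{Cat}$ itself. For $id_{\mathbf{S}}$ the reindexing does nothing, so $\bm{\lambda\rho}(id_{\mathbf{S}})$ is the identity functor on $\bm{\lambda\rho}(\mathbf{S})$. For composition, take $f\colon\mathbf{T}\to\mathbf{S}$ and $g\colon\mathbf{S}\to\mathbf{R}$ in $\mathsf{Sg}$; both $\bm{\lambda\rho}(g\circ f)$ and $\bm{\lambda\rho}(f)\circ\bm{\lambda\rho}(g)$ are functors $\bm{\lambda\rho}(\mathbf{R})\to\bm{\lambda\rho}(\mathbf{T})$, and on a $\lambda\rho$-system $\mathcal{R}$ over $\mathbf{R}$ both return the system with sets $(I[g(f(a))])_{a\in T}$ and the correspondingly reindexed $\blambda$, $\brho$, and similarly on arrows; so $\bm{\lambda\rho}(g\circ f)=\bm{\lambda\rho}(f)\circ\bm{\lambda\rho}(g)$, which is the required contravariant composition law. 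I expect no genuine obstacle: every step collapses to the remark that substituting $f$ into an index preserves the assumed equation or commuting square. The only points needing attention are keeping the variance straight, so that $\bm{\lambda\rho}(f)$ runs opposite to $f$, and noting insensitivity of the construction to $f$ identifying indices.
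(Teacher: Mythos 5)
Your proof is correct and follows exactly the route the paper intends: the paper omits the verification as a "series of tedious but straightforward calculations," noting only that contravariance makes the reindexed data well defined and that the homomorphism property of $f$ yields ($\alpha$), ($\beta$), ($\gamma$) and the behaviour on morphisms. Your write-up simply supplies those details, including the functor laws for $\bm{\lambda\rho}$ itself, with the same key observation that substituting $f$ into indices preserves every equation and commuting square.
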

\begin{proof}
The proof is a series of tedious but straightforward calculations, which we
omit. A crucial point is that since $\bm{\lambda\rho}(f)$ acts contravariantly,
$\bm{\lambda\rho}(f)\mathbf{I}$, 
$\bm{\lambda\rho}(f)\blambda$ and $\bm{\lambda\rho}(f)\brho$ are 
well defined. For the proofs that ($\alpha$), ($\beta$) and ($\gamma$) are
satisfied, and that $\bm{\lambda\rho}(f)$ behaves properly on morphisms of
$\lambda\rho$-systems, we only need the definitions and the fact that $f$ is a
homomorphism. 
\end{proof}

Applying Grothendieck construction with
$\mathsf{C} = \mathsf{Sg}$ and
$F = \bm{\lambda\rho}$, we obtain
a category $\mathsf{\Gamma}(\bm{\lambda\rho})$ of general
$\lambda\rho$-systems. The next lemma characterises the arrows of this category.

\begin{lemma}\label{groth-arrows}
Let $(\mathbf{S}, \mathcal{S})$ and $(\mathbf{T}, \mathcal{T})$
be general $\lambda\rho$-systems, with
$\mathcal{S} =  (\mathbf{I}, \boldsymbol{\lambda}^I, 
\boldsymbol{\rho}^I)$ and $\mathcal{T} = (\mathbf{J}, 
\boldsymbol{\lambda}^J, \boldsymbol{\rho}^J)$.
An arrow from $(\mathbf{S}, \mathcal{S})$ to $(\mathbf{T},\mathcal{T})$ 
is a pair $(h, \mathbf{t})$ consisting  
of a homomorphism $h\colon \mathbf{T}\to \mathbf{S}$ and 
a system of maps $\mathbf{t} = \bigl(t[a]\colon I[{h(a)}] \to J[a]\bigr)_{a\in T}$
such that the diagrams below commute.
$$
\begin{tikzpicture}[>=stealth,auto]
\node (tab) at (0,0) {$I[{h(a)h(b)}] = I[{h(ab)}]$};
\node (ab) at (3,0) {$J[{ab}]$};
\node (ta) at (0,-2) {$I[{h(a)}]$};
\node (a) at (3,-2) {$J[a]$};
\draw[->] (tab) to node {$t[ab]$} (ab);
\draw[->] (tab) to node[swap] {$\lambda^I[{h(a),h(b)}]$} (ta);
\draw[->] (ab) to node[swap] {$\lambda^J[{a,b}]$} (a);
\draw[->] (ta) to node {$t[a]$} (a);
\end{tikzpicture} 
\qquad
\begin{tikzpicture}[>=stealth,auto]
\node (tab) at (0,0) {$I[{h(a)h(b)}] = I[{h(ab)}]$};
\node (ab) at (3,0) {$J[{ab}]$};
\node (tb) at (0,-2) {$I[{h(a)}]$};
\node (b) at (3,-2) {$J[b]$};
\draw[->] (tab) to node {$t[ab]$} (ab);
\draw[->] (tab) to node[swap] {$\rho^I[{h(a),h(b)}]$} (tb);
\draw[->] (ab) to node[swap] {$\rho^J[{a,b}]$} (b);
\draw[->] (tb) to node {$t[b]$} (b);
\end{tikzpicture} 
$$
\end{lemma}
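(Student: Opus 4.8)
The plan is to simply unfold the Grothendieck construction for the specific contravariant functor $\bm{\lambda\rho}\colon\mathsf{Sg}^{op}\to\mathsf{Cat}$ produced by the previous lemma. By the definition of $\mathsf{\Gamma}(F)$ with $\mathsf{C}=\mathsf{Sg}$ and $F=\bm{\lambda\rho}$, an arrow from $(\mathbf{S},\mathcal{S})$ to $(\mathbf{T},\mathcal{T})$ is a pair $(h,\mathbf{t})$ where $h$ is an arrow $\mathbf{T}\to\mathbf{S}$ in $\mathsf{Sg}$, i.e.\ a semigroup homomorphism, and $\mathbf{t}$ is an arrow $\bm{\lambda\rho}(h)(\mathcal{S})\to\mathcal{T}$ in the category $\bm{\lambda\rho}(\mathbf{T})$. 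The first component is therefore exactly a homomorphism $h\colon\mathbf{T}\to\mathbf{S}$, with the direction reversed relative to the objects because $\bm{\lambda\rho}$ is contravariant; this is the only point in the argument that requires a moment's thought.

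Next I would spell out $\bm{\lambda\rho}(h)(\mathcal{S})$ using part (1) of the preceding lemma: it is the $\lambda\rho$-system over $\mathbf{T}$ whose family of sets is $\bigl(I[h(a)]\bigr)_{a\in T}$, whose left maps are $\lambda^I[h(a),h(b)]\colon I[h(ab)]\to I[h(a)]$ and whose right maps are $\rho^I[h(a),h(b)]\colon I[h(ab)]\to I[h(b)]$. Here the identification $I[h(a)h(b)]=I[h(ab)]$ that appears in the statement is exactly the one made legitimate by $h$ being a homomorphism, so that $\bm{\lambda\rho}(h)(\mathcal{S})$ is indeed a well-defined $\lambda\rho$-system over $\mathbf{T}$ and the composite arrow type $\bm{\lambda\rho}(h)(\mathcal{S})\to\mathcal{T}$ makes sense inside $\bm{\lambda\rho}(\mathbf{T})$.

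Finally I would apply Definition~\ref{t-over-S}: an arrow in $\bm{\lambda\rho}(\mathbf{T})$ from $\bm{\lambda\rho}(h)(\mathcal{S})$ to $\mathcal{T}=(\mathbf{J},\boldsymbol{\lambda}^J,\boldsymbol{\rho}^J)$ is precisely a system of maps $\mathbf{t}=\bigl(t[a]\colon I[h(a)]\to J[a]\bigr)_{a\in T}$ satisfying $\lambda^J[a,b]\circ t[ab]=t[a]\circ\lambda^I[h(a),h(b)]$ and $\rho^J[a,b]\circ t[ab]=t[b]\circ\rho^I[h(a),h(b)]$ for all $a,b\in T$, and these two equations are exactly the commutativity of the two displayed diagrams. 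There is no genuine obstacle here; the lemma is pure bookkeeping, and the proof consists entirely of substituting the data of the previous lemma and Definition~\ref{t-over-S} into the generic formula for arrows in $\mathsf{\Gamma}(F)$.
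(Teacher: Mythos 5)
Your proposal is correct and is exactly the paper's argument: the paper dismisses the lemma as ``immediate from Grothendieck construction,'' and your write-up just unfolds that, correctly identifying the first component as $h\colon\mathbf{T}\to\mathbf{S}$, the second as a morphism $\bm{\lambda\rho}(h)(\mathcal{S})\to\mathcal{T}$ in $\bm{\lambda\rho}(\mathbf{T})$, and then substituting Definition~\ref{t-over-S} to get the two commuting squares. Nothing further is needed.
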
  

\begin{proof}
Immediate from Grothendieck construction.
\end{proof}  

We will refer to arrows of $\mathsf{\Gamma}(\bm{\lambda\rho})$ as
\emph{transformations}. Morphisms of $\lambda\rho$-systems are then a particular case
of transformations. Namely, for general $\lambda\rho$-systems
$(\mathbf{S}, \mathcal{S})$ and $(\mathbf{T}, \mathcal{T})$,
if $\mathbf{S} = \mathbf{T}$, then
for any transformation $(id_S,\mathbf{t})\colon \mathcal{T}\to\mathcal{S}$
we have that $\mathbf{t}$ is a morphism of $\lambda\rho$-systems.

Any $\lambda\rho$-system over a semigroup
$\mathbf{S}$ has a natural restriction to any subsemigroup $\mathbf{T}$ of
$\mathbf{S}$.  
Let $\mathcal{S} = (\mathbf{I}, \boldsymbol{\lambda},\boldsymbol{\rho})$
be a $\lambda\rho$-system over $\mathbf{S}$  and\/ let $\mathbf{T}\leq\mathbf{S}$. Then
$\mathcal{T} = (\mathbf{I}|_T,\blambda|_T,\brho|_T)$, where
$\mathbf{I}|_T$, $\blambda|_T$ and $\brho|_T$ are the restrictions of\/
$\mathbf{I}$, $\blambda$ and $\brho$ to $T$,  
is a $\lambda\rho$-system over $\mathbf{T}$. Moreover,
$(e,\mathbf{t})\colon (\mathbf{S},\mathcal{S})\to(\mathbf{T},\mathcal{T})$, defined by
taking $e\colon\mathbf{T}\to\mathbf{S}$ to be the identity embedding,
and $\mathbf{t} = \bigl(t[a]\colon I[e(a)] \to I[a]\bigr)_{a\in T}$,
where $t[a] = {id}_{I[a]}$, is obviously a transformation.
Note that restrictions are completely determined by subsemigroups, so 
we may write $(\mathbf{T}, \mathcal{S}|_T)$ for a restriction
of $(\mathbf{S},\mathcal{S})$ with $\mathbf{T}\leq\mathbf{S}$.

As usual, we will write
$(\mathbf{S},\mathcal{S})\cong(\mathbf{T},\mathcal{T})$
for general $\lambda\rho$-systems isomorphic in the category
$\mathsf{\Gamma}(\boldsymbol{\lambda\rho})$.
In Section~\ref{wreath} it will be useful to have 
a more explicit characterisation of isomorphic 
general $\lambda\rho$-systems, which is given below without an easy proof.  

\begin{lemma}\label{isom}
Let $(\mathbf{S},\mathcal{S})$ and $(\mathbf{T},\mathcal{T})$ be general
$\lambda\rho$-systems. $(\mathbf{S},\mathcal{S})$ and
$(\mathbf{T},\mathcal{T})$ are isomorphic if and only if 
there exists a transformation $(e,\mathbf{t})\colon
(\mathbf{S},\mathcal{S})\to(\mathbf{T},\mathcal{T})$ such that
$e\colon\mathbf{T}\to\mathbf{S}$ is an isomorphism of semigroups, and each
$t[a]$ in the system  
$\mathbf{t} = \bigl(t[a]\colon I[e(a)] \to I[a]\bigr)_{a\in T}$
is a bijection.
\end{lemma}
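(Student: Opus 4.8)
The plan is to unwind the two directions through the Grothendieck construction, using Lemma~\ref{groth-arrows} to replace ``isomorphic in $\mathsf{\Gamma}(\bm{\lambda\rho})$'' by the concrete data of a pair $(h,\mathbf{t})$ together with an inverse pair.

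\smallskip

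First, the ``if'' direction. Suppose we are given $(e,\mathbf{t})\colon(\mathbf{S},\mathcal{S})\to(\mathbf{T},\mathcal{T})$ with $e\colon\mathbf{T}\to\mathbf{S}$ an isomorphism and each $t[a]\colon I[e(a)]\to J[a]$ a bijection. I would simply exhibit an inverse transformation and check that the two composites are identities. The obvious candidate is $(e^{-1},\mathbf{s})\colon(\mathbf{T},\mathcal{T})\to(\mathbf{S},\mathcal{S})$, where for $b\in S$ we set $s[b] = t[e^{-1}(b)]^{-1}\colon J[e^{-1}(b)]\to I[b]$ (note $I[e(e^{-1}(b))] = I[b]$, so the types match). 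That this is a transformation is a diagram chase: the relevant squares for $(e^{-1},\mathbf{s})$ are obtained from those for $(e,\mathbf{t})$ in Lemma~\ref{groth-arrows} by inverting all the vertical bijections $t[\cdot]$ and relabelling indices via $e^{-1}$, and commutativity is preserved under this operation. Then one verifies, using the composition formula from the Grothendieck construction, that $(e^{-1},\mathbf{s})\circ(e,\mathbf{t})$ and $(e,\mathbf{t})\circ(e^{-1},\mathbf{s})$ are the identity transformations on $(\mathbf{S},\mathcal{S})$ and $(\mathbf{T},\mathcal{T})$ respectively — this reduces to $e\circ e^{-1} = \mathrm{id}$, $e^{-1}\circ e = \mathrm{id}$ on the semigroup component and $t[a]^{-1}\circ t[a] = \mathrm{id}$, $t[a]\circ t[a]^{-1} = \mathrm{id}$ (after the appropriate relabelling of indices) on the system component.

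\smallskip

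Now the ``only if'' direction. Suppose $(\mathbf{S},\mathcal{S})\cong(\mathbf{T},\mathcal{T})$, witnessed by mutually inverse transformations $(h,\mathbf{t})\colon(\mathbf{S},\mathcal{S})\to(\mathbf{T},\mathcal{T})$ and $(k,\mathbf{u})\colon(\mathbf{T},\mathcal{T})\to(\mathbf{S},\mathcal{S})$. Here $h\colon\mathbf{T}\to\mathbf{S}$ and $k\colon\mathbf{S}\to\mathbf{T}$ are semigroup homomorphisms, $t[a]\colon I[h(a)]\to J[a]$ for $a\in T$, and $u[b]\colon J[k(b)]\to I[b]$ for $b\in S$. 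Applying the composition formula from the Grothendieck construction to $(k,\mathbf{u})\circ(h,\mathbf{t}) = (\mathrm{id},\text{identity system})$ on the $\mathsf{Sg}$-component gives $h\circ k = \mathrm{id}_S$, and from the other composite $k\circ h = \mathrm{id}_T$; hence $h$ is a semigroup isomorphism with $k = h^{-1}$, so we may take $e = h$. For the system component, the Grothendieck composition formula says that the $a$-th component of $(k,\mathbf{u})\circ(h,\mathbf{t})$ is (up to the index-relabelling that $\bm{\lambda\rho}(h)$ performs) the composite $u[h(a)]\circ t[a]$; since this composite equals the identity on $I[h(a)]$ for every $a$, and symmetrically $t[k(b)]\circ u[b] = \mathrm{id}_{J[b]}$, each $t[a]$ is a bijection with inverse $u[h(a)]$. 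Thus $(e,\mathbf{t}) = (h,\mathbf{t})$ is a transformation of the required form.

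\smallskip

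I do not expect a genuine obstacle here — the statement is of the ``unwind the abstract nonsense'' type that the paper itself flags as having an easy proof. The only place demanding care is bookkeeping the contravariant relabelling: the functor $\bm{\lambda\rho}(h)$ reindexes a $\lambda\rho$-system over $\mathbf{S}$ to one over $\mathbf{T}$ via $a\mapsto h(a)$, so when one writes out the composite of two transformations the index sets shift, and one must confirm that ``$u[h(a)]\circ t[a] = \mathrm{id}$ for all $a\in T$'' is genuinely the condition coming out of $(k,\mathbf{u})\circ(h,\mathbf{t}) = \mathrm{id}$. Once that identification is made cleanly, both directions are immediate, and the argument is short enough that the ``easy proof'' the authors omit can be reconstructed in a few lines.
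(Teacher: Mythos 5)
Your argument is correct and is exactly the straightforward unwinding of the Grothendieck construction that the paper has in mind when it states Lemma~\ref{isom} ``without an easy proof'': in both directions you reduce isomorphy in $\mathsf{\Gamma}(\bm{\lambda\rho})$ to $h\circ k=\mathrm{id}_S$, $k\circ h=\mathrm{id}_T$ on the semigroup component and $u[h(a)]\circ t[a]=\mathrm{id}$, $t[a]\circ u[h(a)]=\mathrm{id}$ on the system component, which is precisely the bookkeeping the authors omit. The index-relabelling point you flag (that the $b$-th component of the composite is $u[b]\circ t[k(b)]$, i.e. $u[h(a)]\circ t[a]$ after setting $b=h(a)$) is handled correctly, so no gap remains.
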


\section{Construction of $\lambda\rho$-systems over free
  semigroups}\label{free-constr} 

It this section we show that any family of sets $F = \{I[x]: x\in X\}$
gives rise to a natural $\lambda\rho$-system $\mathcal{X}^+$ over the free
semigroup $X^+$, which
is in a good sense the most general
$\lambda\rho$-system associated with $F$. For if $X$ happens to be the
universe of a semigroup $\mathbf{X}$, and $F$ carries the
structure of a $\lambda\rho$-system $\mathcal{F}$ over $\mathbf{X}$, then there is a
transformation from $\mathcal{F}$ to $\mathcal{X}^+$, and moreover for any
semigroup $\mathbf{H}$ there is an onto homomorphism from
$\mathbf{H}^{[\mathcal{X}^+]}$ to $\mathbf{H}^{[\mathcal{F}]}$.   

\begin{definition}\label{over-free-sgrp}
Let $X$ be a nonempty set, and let $I[x]$ be a set for each $x\in X$.
Let $X^+$ be the free semigroup, freely generated by some set $X$,
\begin{itemize}
\item For each word $w = x_1x_2\cdots x_k\in X^+$,   
we put $I[w] = I[{x_1}] \times \dots \times I[{x_k}]$.
\item Since $I[wu] = I[w]\times I[u]$ for all $w,u\in X^+$,  we put
\begin{itemize}  
\item $\lambda[w,u]\colon I[wu] \to I[w]$ to be the first projection and
\item $\rho[w,u]\colon I[wu] \to I[u]$ to be the second projection.
\end{itemize}
\end{itemize}
\end{definition}

\begin{lemma}\label{free-is-free}
Let\/  $X^+$ be the free semigroup generated by $X$, and let
$$
\mathcal{X}^+ = \bigl(\langle \lambda[w,u],\rho[w,u]\rangle\colon
I[wu]\longrightarrow I[w]\times I[u]\bigr)_{(w,u)\in (X^+)^2}
$$
be the system of sets and maps of Definition~\ref{over-free-sgrp}. Then
$\mathcal{X}^+$ is a $\lambda\rho$-system over $X^+$.
\end{lemma}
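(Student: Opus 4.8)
The plan is to verify the three defining conditions $(\alpha)$, $(\beta)$, $(\gamma)$ of a $\lambda\rho$-system directly from the explicit description of $\mathcal{X}^+$, using the fact that over a free semigroup the concatenation of words corresponds literally to the Cartesian product of the associated sets, and the $\lambda$, $\rho$ maps are nothing but the two projections. The key observation that makes everything routine is associativity of the Cartesian product at the level on which we work: for words $w,u,v\in X^+$ we have $I[wuv] = I[w]\times I[u]\times I[v]$, and this identification is compatible with the two ways of bracketing, $(I[w]\times I[u])\times I[v]$ and $I[w]\times(I[u]\times I[v])$. I will treat $I[wuv]$ as the set of triples $(p,q,r)$ with $p\in I[w]$, $q\in I[u]$, $r\in I[v]$, so that all the projection maps in sight have completely transparent descriptions.

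First I would fix $a,b,c\in X^+$ and spell out each map occurring in the diagram of Figure~\ref{l-r-system}. With the triple notation: $\lambda[a,bc]$ sends $(p,q,r)\mapsto p$; $\lambda[ab,c]$ sends $(p,q,r)\mapsto (p,q)$; $\rho[a,bc]$ sends $(p,q,r)\mapsto (q,r)$; $\rho[ab,c]$ sends $(p,q,r)\mapsto r$; and on the two-fold products, $\lambda[a,b]\colon (p,q)\mapsto p$, $\rho[a,b]\colon (p,q)\mapsto q$, $\lambda[b,c]\colon (q,r)\mapsto q$, $\rho[b,c]\colon (q,r)\mapsto r$. Then $(\alpha)$ reads $\lambda[a,b]\circ\lambda[ab,c]\colon (p,q,r)\mapsto(p,q)\mapsto p$, which equals $\lambda[a,bc]$; condition $(\beta)$ reads $\rho[b,c]\circ\rho[a,bc]\colon (p,q,r)\mapsto(q,r)\mapsto r$, which equals $\rho[ab,c]$; and $(\gamma)$ reads $\rho[a,b]\circ\lambda[ab,c]\colon(p,q,r)\mapsto(p,q)\mapsto q$ versus $\lambda[b,c]\circ\rho[a,bc]\colon(p,q,r)\mapsto(q,r)\mapsto q$, which agree. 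That completes the verification.

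There is no real obstacle here; the only thing requiring a word of care is the bookkeeping identification $I[wu] = I[w]\times I[u]$ and its iterate $I[wuv] = I[w]\times I[u]\times I[v]$, i.e. making sure that Definition~\ref{over-free-sgrp} is internally consistent when a word is split in more than one way. Since concatenation in $X^+$ is strictly associative and $I[w]$ was defined by induction on the length of $w$ as the product of the $I[x_i]$ over the letters $x_i$ of $w$, this associativity is literally the associativity of iterated Cartesian product, and the projections compose in the evident way. So the proof amounts to observing that the commuting diagram of Figure~\ref{l-r-system} is, after the identification, just the standard diagram expressing that the three projections out of a triple product factor consistently through the two binary projections. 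I would write this up in a few lines, phrasing it as: after identifying $I[abc]$ with $I[a]\times I[b]\times I[c]$, conditions $(\alpha)$, $(\beta)$, $(\gamma)$ become the trivial identities $\mathrm{pr}_1 = \mathrm{pr}_1\circ(\mathrm{pr}_1,\mathrm{pr}_2)$, $\mathrm{pr}_3 = \mathrm{pr}_2\circ(\mathrm{pr}_2,\mathrm{pr}_3)$, and $\mathrm{pr}_2 = \mathrm{pr}_2\circ(\mathrm{pr}_1,\mathrm{pr}_2) = \mathrm{pr}_1\circ(\mathrm{pr}_2,\mathrm{pr}_3)$ on $I[a]\times I[b]\times I[c]$.
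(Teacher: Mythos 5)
Your proof is correct and is essentially the paper's argument: the paper disposes of the lemma in one line by noting that the maps are compositions of projections, and you simply write out that verification explicitly, including the (harmless) identification $I[wuv]=I[w]\times I[u]\times I[v]$ coming from strict associativity of concatenation. No gap and no genuinely different route.
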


\begin{proof}
The commutation conditions ($\alpha$), ($\beta$) and ($\gamma$) 
clearly hold as the maps are compositions of projections. 
\end{proof}

Any transformation of $\lambda\rho$-systems gives rise
to a homomorphism of $\lambda\rho$-products.

\begin{definition}\label{hom-from-trans}
Let $(\mathbf{S},\mathcal{S})$ and
$(\mathbf{T},\mathcal{T})$ be general $\lambda\rho$-systems, with
$\mathcal{S} =  (\mathbf{I}, \boldsymbol{\lambda}^I, 
\boldsymbol{\rho}^I)$ and $\mathcal{T} = (\mathbf{J}, 
\boldsymbol{\lambda}^J, \boldsymbol{\rho}^J)$.
Let $(h, \mathbf{t})\colon (\mathbf{S},\mathcal{S})\to
(\mathbf{T},\mathcal{T})$ be a transformation.
For any semigroup $\mathbf{H}$, we define
$\mathbf{H}^{(h,\mathbf{t})}\colon 
\mathbf{H}^{[\mathcal{T}]} \to \mathbf{H}^{[\mathcal{S}]}$ to be the map 
$$
\mathbf{H}^{(h, \mathbf{t})}(x,a) = (x\circ t[a],\ h(a))
$$
for every $(x,a)\in\biguplus_{a\in T} H^{J[a]}$. 
\end{definition}

The notation $\mathbf{H}^{(h,\mathbf{t})}$, common in category theory 
unfortunately produces a slight notational clash. The map applies to an element
$(x,a)$, where the second coordinate is from $\mathbf{T}$,
but in the superscript we have  $(h,\mathbf{t})$, where the first coordinate is
a homomorphism from $\mathbf{T}$ to $\mathbf{S}$. This is done for consistency
with general $\lambda\rho$-systems on the one hand and
transformations on the other, and should not cause confusion.

\begin{theorem}\label{trans}
Let $(\mathbf{S},\mathcal{S})$, $(\mathbf{T},\mathcal{T})$ and
$(h, \mathbf{t})\colon (\mathbf{S},\mathcal{S})\to (\mathbf{T},\mathcal{T})$
be as above. Then,
$\mathbf{H}^{(h,\mathbf{t})}\colon 
\mathbf{H}^{[\mathcal{T}]} \to \mathbf{H}^{[\mathcal{S}]}$ defined above
is a homomorphism for any semigroup $\mathbf{H}$.
Moreover, $\mathbf{H}^{-}$ is a contravariant functor from the category
$\Gamma(\blambda\brho)$ to the category $\mathsf{Sg}$ of semigroups.
\end{theorem}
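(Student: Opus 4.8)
The plan is to verify directly that $\mathbf{H}^{(h,\mathbf{t})}$ respects the operation $\star$, using Proposition~\ref{P1} and the commutation conditions defining a transformation (Lemma~\ref{groth-arrows}), and then to check the two functoriality axioms. First I would compute $\mathbf{H}^{(h,\mathbf{t})}\bigl((x,a)\star(y,b)\bigr)$ for $(x,a),(y,b)\in\mathbf{H}^{[\mathcal{T}]}$. By definition of $\star$ in $\mathbf{H}^{[\mathcal{T}]}$ this is $\mathbf{H}^{(h,\mathbf{t})}\bigl((x\circ\lambda^J[a,b])\cdot(y\circ\rho^J[a,b]),\ ab\bigr)$, which unwinds to $\bigl(\bigl((x\circ\lambda^J[a,b])\cdot(y\circ\rho^J[a,b])\bigr)\circ t[ab],\ h(ab)\bigr)$. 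Now $h(ab)=h(a)h(b)$ since $h$ is a homomorphism, and by Proposition~\ref{P1} the first coordinate distributes as $(x\circ\lambda^J[a,b]\circ t[ab])\cdot(y\circ\rho^J[a,b]\circ t[ab])$. The two commuting squares of Lemma~\ref{groth-arrows} give $\lambda^J[a,b]\circ t[ab]=t[a]\circ\lambda^I[h(a),h(b)]$ and $\rho^J[a,b]\circ t[ab]=t[b]\circ\rho^I[h(a),h(b)]$, so this equals $(x\circ t[a]\circ\lambda^I[h(a),h(b)])\cdot(y\circ t[b]\circ\rho^I[h(a),h(b)])$, which is precisely the first coordinate of $(x\circ t[a],\ h(a))\star(y\circ t[b],\ h(b))$ computed in $\mathbf{H}^{[\mathcal{S}]}$. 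Hence $\mathbf{H}^{(h,\mathbf{t})}$ is a homomorphism.

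For the functoriality claim I would first check that $\mathbf{H}^{-}$ sends the identity transformation $(id_S,(id_{I[a]})_{a\in S})$ on $(\mathbf{S},\mathcal{S})$ to the identity homomorphism on $\mathbf{H}^{[\mathcal{S}]}$; this is immediate from $\mathbf{H}^{(id,\mathbf{id})}(x,a)=(x\circ id_{I[a]},\ id_S(a))=(x,a)$. Then, given composable transformations $(h_1,\mathbf{t}_1)\colon(\mathbf{S}_1,\mathcal{S}_1)\to(\mathbf{S}_2,\mathcal{S}_2)$ and $(h_2,\mathbf{t}_2)\colon(\mathbf{S}_2,\mathcal{S}_2)\to(\mathbf{S}_3,\mathcal{S}_3)$, I would recall from the Grothendieck construction that their composite has first component $h_1\circ h_2$ and a second component whose $a$-th map is $t_1[h_2(a)]\circ t_2[a]$ (after transporting along $\bm{\lambda\rho}(h_2)$). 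Applying $\mathbf{H}^{-}$ and chasing an element $(x,a)$ through $\mathbf{H}^{(h_1,\mathbf{t}_1)}\circ\mathbf{H}^{(h_2,\mathbf{t}_2)}$, we get $(x\circ t_2[a]\circ t_1[h_2(a)],\ h_1(h_2(a)))$, which matches $\mathbf{H}^{(h_2,\mathbf{t}_2)\circ(h_1,\mathbf{t}_1)}(x,a)$; the order reversal of the homomorphisms against the order of the transformations is exactly what makes $\mathbf{H}^{-}$ contravariant, consistent with the notation.

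I expect the only real obstacle to be bookkeeping: keeping straight the direction reversal inherent in the Grothendieck construction (the first component of a transformation $(\mathbf{S},\mathcal{S})\to(\mathbf{T},\mathcal{T})$ is a homomorphism $\mathbf{T}\to\mathbf{S}$, and $\mathbf{H}^{-}$ reverses this again), and matching the reindexing in the second component of composites against the pullback functor $\bm{\lambda\rho}(h)$ from the earlier lemma. None of the individual verifications is deep — the homomorphism property is one application of Proposition~\ref{P1} plus the two squares of Lemma~\ref{groth-arrows}, and functoriality is a routine element chase — so I would present the homomorphism computation in full and then state that the functoriality axioms follow by a direct calculation from the definition of composition in $\mathsf{\Gamma}(\bm{\lambda\rho})$, leaving the details to the reader.
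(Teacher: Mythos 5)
Your proposal is correct and follows essentially the same route as the paper: the homomorphism property is exactly the paper's computation (unwind $\star$, apply Proposition~\ref{P1}, then the two commuting squares of Lemma~\ref{groth-arrows}), and the functoriality part, which the paper dismisses as straightforward, is filled in by your element chase in the expected way. One cosmetic slip: under the paper's composition convention the composite transformation's $a$-th map is $t_2[a]\circ t_1[h_2(a)]$ rather than $t_1[h_2(a)]\circ t_2[a]$, as your own correct chase $(x\circ t_2[a]\circ t_1[h_2(a)],\ h_1(h_2(a)))$ in fact shows.
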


\begin{proof}
It is clear that the map $\mathbf{H}^{(h, \mathbf{t})}$ is well defined.
Let $(x,a), (y,b)\in \biguplus_{a\in T} H^{J[a]}$. Then, we have
\begin{align*}
\mathbf{H}^{(h, \mathbf{t})}\bigl((x,a)\star (y,b)\bigr)
&=\mathbf{H}^{(h,\mathbf{t})}\bigl((x\circ\lambda^J[{a,b}])
    (y\circ\rho^J[{a,b}]),\ ab\bigr)\\ 
&= \Bigl(\bigl((x\circ\lambda^J[{a,b}])(y\circ\rho^J[{a,b}])\bigr)\circ t[ab],\
                                          h(ab)\Bigr)\\   
&= \Bigl((x\circ\lambda^J[{a,b}]\circ t[ab])(y\circ\rho^J[{a,b}]\circ t[ab]),\
h(a)h(b)\Bigr) \\
&= \Bigl(\bigl(x\circ t[a]\circ \lambda^I[{h(a),h(b)}]\bigr)
    \bigl(y\circ t[b]\circ\rho^I[{h(a),h(b)}]\bigr),\ h(a)h(b)\Bigr)\\
&= \bigl(x\circ t[a],\ h(a)\bigr)\star\bigl(y\circ t[b],\ h(b)\bigr)\\
&= \mathbf{H}^{(h, \mathbf{t})}(x,a)\star \mathbf{H}^{(h, \mathbf{t})}(y,b). 
\end{align*}
The proof of the moreover part is straightforward. 
\end{proof}

Now, consider a $\lambda\rho$-system $\mathcal{S}$ over some semigroup 
$\mathbf{S}$. Taking $S$ as the set of free generators, form 
the free semigroup $S^+$. Let $\otimes\colon S^+\to \mathbf{S}$ 
be the homomorphism extending the identity map
on $S$, so that $\otimes s = s$ for any $s\in S$.
We will write
$\otimes(s_1s_2\dots s_n)$ for the product of the elements
$s_1,s_2,\dots, s_n$ of $S$ in $\mathbf{S}$, reserving
$s_1s_2\dots s_n$ for the word in $S^+$.

\begin{definition}\label{free-t}
Let $\mathbf{S}$, $\mathcal{S}$ and $\otimes$ be as above, and let
$\mathcal{S}^+$ be the $\lambda\rho$-system over $S^+$ of
Definition~\ref{over-free-sgrp}. We define a system of maps 
$$
\mathbf{t} = (t[w]\colon I[\otimes w] \to
I[{s_1}]\times I[{s_2}]\times\dots\times I[{s_n}])_{w\in S^+}
$$
where $w = s_1s_2\dots s_n$, as follows. For each $s\in S$ we put
$t[s]\colon I[\otimes s]\to I[s]$ to be the identity map on $I[s]$.  
For each $w = s_1s_2\dots s_n$ with $n\geq 2$, and each $z\in I[\otimes w]$ we put
$t[s_1s_2\dots s_n](z) = (v_1,\dots, v_n)$, where
\begin{align*}
v_1 &= \lambda[s_1, \otimes(s_2s_3\cdots s_n)](z),\\
v_j &= \rho[\otimes(s_1\cdots s_{j-1}), s_j]\circ 
\lambda[\otimes(s_1\cdots s_j), \otimes(s_{j+1}\cdots s_n)](z),\\
&\qquad\text{ for each } j\in\{2,\dots,n-1\},\\  
v_n &= \rho[\otimes(s_1\cdots s_{n-1}), s_n](z),
\end{align*}
and $\lambda$, $\rho$ are from $\mathcal{S}$. 
\end{definition}

If $I[\otimes w] =\emptyset$ then
$t[w]$ is the empty map, of course.
  
\begin{lemma}\label{well-defd}
Let $\mathbf{S}$, $\mathcal{S}$, $\mathcal{S}^+$, $\otimes$ and\/
$\mathbf{t}$ be as in Definition~\ref{free-t}.   Then, the following hold:
\begin{enumerate}
\item For each $s\in S$, we have $t[s] = {id}_{I[s]}$.
\item For any $s_1,s_2,\dots, s_n \in S$ and any $z\in I[\otimes(s_1s_2\dots s_n)]$
we have
\begin{align*}
v_j &= \rho[\otimes(s_1\cdots s_{j-1}), s_j]\circ 
\lambda[\otimes(s_1\cdots s_j), \otimes(s_{j+1}\cdots s_n)](z)\\
&= \lambda[s_j, \otimes(s_{j+1}\cdots s_n)]\circ 
\rho[\otimes(s_1\cdots s_{j-1}),\otimes(s_{j}\cdots s_n)](z)
\end{align*}
for each $j\in \{2,\cdots,n-1\}$.
\end{enumerate}
\end{lemma}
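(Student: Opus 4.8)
The plan is to prove both clauses directly from the definition of $\mathbf{t}$ and the $\lambda\rho$-system axioms, with clause (1) being immediate and clause (2) the substantive one.

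\medskip

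For clause (1): by Definition~\ref{free-t}, the word $w = s$ has length $1$, so $t[s]$ is set by fiat to be ${id}_{I[s]}$; there is nothing to check beyond observing that $\otimes s = s$ so the domain $I[\otimes s]$ really is $I[s]$. (If $I[s] = \emptyset$ the identity is the empty map, consistent with the convention noted after the definition.)

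\medskip

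For clause (2), I would fix $s_1,\dots,s_n \in S$ and $z \in I[\otimes(s_1\cdots s_n)]$, abbreviate $p = \otimes(s_1\cdots s_{j-1})$, $q = s_j$, $r = \otimes(s_{j+1}\cdots s_n)$, so that $pqr = \otimes(s_1\cdots s_n)$ and $\otimes(s_1\cdots s_j) = pq$, $\otimes(s_j\cdots s_n) = qr$, $\otimes(s_{j+1}\cdots s_n) = r$. The claimed identity is then exactly
$$
\rho[p,q]\circ\lambda[pq,r] = \lambda[q,r]\circ\rho[p,qr],
$$
which is precisely axiom ($\gamma$) of the $\lambda\rho$-system $\mathcal{S}$ applied to the triple $(a,b,c) = (p,q,r)$ — that is, $\rho[a,b]\circ\lambda[ab,c] = \lambda[b,c]\circ\rho[a,bc]$ with the substitution above. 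So the content of clause (2) is just the observation that the various multi-letter subwords collapse under $\otimes$ to the right three-element products, after which ($\gamma$) finishes it. The range $j \in \{2,\dots,n-1\}$ is exactly what guarantees that both $p$ (a nonempty prefix) and $r$ (a nonempty suffix) and the single letter $q$ are all well-defined, so no boundary cases intrude.

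\medskip

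The only mildly delicate point — and the closest thing to an obstacle — is bookkeeping the associativity/indexing: making sure that $\otimes$ applied to a concatenation of consecutive blocks equals the product of the $\otimes$-images of the blocks, so that, e.g., $I[\otimes(s_1\cdots s_j)]$ and $I[\otimes(s_j\cdots s_n)]$ are the domains/codomains the axiom expects. This is immediate since $\otimes$ is a homomorphism, and I would state it once explicitly and then apply ($\gamma$). No further computation is needed.
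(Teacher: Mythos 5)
Your proof is correct and follows essentially the same route as the paper: clause (1) is immediate from Definition~\ref{free-t}, and clause (2) is exactly axiom ($\gamma$) of $\mathcal{S}$ applied with $(a,b,c) = (\otimes(s_1\cdots s_{j-1}),\, s_j,\, \otimes(s_{j+1}\cdots s_n))$, using that $\otimes$ is a homomorphism so the blocks collapse to the right products. The paper's proof says precisely this, only more tersely.
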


\begin{proof}
We have (1) directly from Definition~\ref{free-t}, and  
(2) follows easily from the fact that $\lambda$ and $\rho$ come from
$\mathcal{S}$: the non-definitional equality is an application of ($\gamma$).  
\end{proof}

\begin{lemma}
Let $\mathbf{S}$, $\mathcal{S}$, $\mathcal{S}^+$, $\otimes$ and\/
$\mathbf{t}$ be as in Definition~\ref{free-t}.  
Then, $(\otimes,\mathbf{t})\colon (\mathbf{S},\mathcal{S})
\to \bigl(S^+,\mathcal{S}^+\bigr)$ is a transformation. 
\end{lemma}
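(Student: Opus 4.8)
The plan is to verify directly the two commutation conditions of Lemma~\ref{groth-arrows}, specialised to $h=\otimes\colon S^+\to\mathbf{S}$ and $\mathcal{T}=\mathcal{S}^+$, recalling from Definition~\ref{over-free-sgrp} that $\lambda^J[w,u]$ and $\rho^J[w,u]$ are the first and second projections of $I[wu]=I[w]\times I[u]$. Fix $w=s_1\cdots s_m$ and $u=s_{m+1}\cdots s_{m+n}$ in $S^+$, so that $wu=s_1\cdots s_{m+n}$; if $I[\otimes(wu)]=\emptyset$ then every map in sight is empty and there is nothing to prove, so assume it is nonempty, take $z\in I[\otimes(wu)]$, and write $t[wu](z)=(v_1,\dots,v_{m+n})$ as in Definition~\ref{free-t}. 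Projecting, the two conditions become
\[
(v_1,\dots,v_m)=t[w]\bigl(\lambda[\otimes w,\otimes u](z)\bigr)
\qquad\text{and}\qquad
(v_{m+1},\dots,v_{m+n})=t[u]\bigl(\rho[\otimes w,\otimes u](z)\bigr),
\]
so it suffices to compute the components of the right-hand sides from Definition~\ref{free-t} (applied to $w$, resp.\ to $u$) and match them with the $v_j$.

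For the first identity I would expand, for $2\le j\le m-1$, the $j$-th component of $t[w]\bigl(\lambda[\otimes w,\otimes u](z)\bigr)$ as
\[
\rho[\otimes(s_1\cdots s_{j-1}),s_j]\circ\lambda[\otimes(s_1\cdots s_j),\otimes(s_{j+1}\cdots s_m)]\circ\lambda[\otimes(s_1\cdots s_m),\otimes(s_{m+1}\cdots s_{m+n})](z),
\]
and collapse the two trailing $\lambda$'s via ($\alpha$) with $a=\otimes(s_1\cdots s_j)$, $b=\otimes(s_{j+1}\cdots s_m)$, $c=\otimes(s_{m+1}\cdots s_{m+n})$, turning them into $\lambda[\otimes(s_1\cdots s_j),\otimes(s_{j+1}\cdots s_{m+n})]$, which is exactly $v_j$ read off from Definition~\ref{free-t} for the word $wu$. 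The index $j=1$ is the same computation (one use of ($\alpha$), or trivial when $m=1$, since then $t[w]=id$), and for $j=m$ (with $m\ge2$) no work is needed, as the last-component formula for $t[w]$ is literally the middle-index formula for $v_m$ in $wu$, legitimate because $n\ge1$.

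For the second identity I would instead use the symmetric description of the $v_j$ from Lemma~\ref{well-defd}(2): for a middle index, $v_{m+k}=\lambda[s_{m+k},\otimes(s_{m+k+1}\cdots s_{m+n})]\circ\rho[\otimes(s_1\cdots s_{m+k-1}),\otimes(s_{m+k}\cdots s_{m+n})](z)$. Applying ($\beta$) with $a=\otimes(s_1\cdots s_m)$, $b=\otimes(s_{m+1}\cdots s_{m+k-1})$, $c=\otimes(s_{m+k}\cdots s_{m+n})$ rewrites $\rho[\otimes(s_1\cdots s_{m+k-1}),\otimes(s_{m+k}\cdots s_{m+n})](z)$ as $\rho[\otimes(s_{m+1}\cdots s_{m+k-1}),\otimes(s_{m+k}\cdots s_{m+n})]\bigl(\rho[\otimes w,\otimes u](z)\bigr)$; substituting back and invoking Lemma~\ref{well-defd}(2) once more for the word $u$ (whose letters are $s_{m+1},\dots,s_{m+n}$) identifies $v_{m+k}$ with the $k$-th component of $t[u]\bigl(\rho[\otimes w,\otimes u](z)\bigr)$. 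The boundary values $k=1$ and $k=n$, and the degenerate $n=1$, are handled the same way, using ($\beta$) again for $k=n$ and nothing beyond the definitions for $k=1$.

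I do not anticipate a genuine obstacle: the computation is pure bookkeeping with the associativity-type equations ($\alpha$) and ($\beta$) (and ($\gamma$), entering through Lemma~\ref{well-defd}(2)). The only point requiring care is the index arithmetic together with the handful of boundary regimes — $j$ or $k$ at the end of its range, a one-letter $w$ or $u$, and an empty $I[\otimes w]$ — each of which degenerates harmlessly.
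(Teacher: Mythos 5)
Your proposal is correct and follows essentially the same route as the paper: fix $z$, expand the components of $t[wu](z)$ and of $t[w]$, $t[u]$ applied to the projected points, and match them using ($\alpha$) for the $\lambda$-diagram. The paper only writes out the $\lambda$-diagram and declares the $\rho$-diagram analogous; your explicit treatment of it via Lemma~\ref{well-defd}(2) and ($\beta$) is exactly the intended analogue.
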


\begin{proof}
It is clear that the range of each map $t[s_1s_2\cdots s_n]$ belongs 
to  $I[{s_1s_2\cdots s_n}]$. We need to show that the following diagrams commute
$$
\begin{tikzpicture}[>=stealth,auto]
\node (tab) at (0,0) {$I[{\otimes(wu)}]$};
\node (ab) at (3,0) {$I[w]\times I[u]$};
\node (ta) at (0,-2) {$I[{\otimes w}]$};
\node (a) at (3,-2) {$I[w]$};
\draw[->] (tab) to node {$t[wu]$} (ab);
\draw[->] (tab) to node[swap] {$\lambda[{\otimes w,\otimes u}]$} (ta);
\draw[->] (ab) to node[swap] {$\lambda[{w,u}]$} (a);
\draw[->] (ta) to node {$t[w]$} (a);
\end{tikzpicture} 
\qquad
\begin{tikzpicture}[>=stealth,auto]
\node (tab) at (0,0) {$I[{\otimes(wu)}]$};
\node (ab) at (3,0) {$I[w]\times I[u]$};
\node (tb) at (0,-2) {$I[{\otimes w}]$};
\node (b) at (3,-2) {$I[u]$};
\draw[->] (tab) to node {$t[wu]$} (ab);
\draw[->] (tab) to node[swap] {$\rho[{\otimes w,\otimes u}]$} (tb);
\draw[->] (ab) to node[swap] {$\rho[{w,u}]$} (b);
\draw[->] (tb) to node {$t[u]$} (b);
\end{tikzpicture} 
$$
where $w = s_1\dots s_k\in S^+$ and $u = s_{k+1}\cdots s_n\in S^+$.
We can assume that $I[\otimes(s_1\dots s_n)]\neq\emptyset$. 
Let $z\in I[\otimes(wu)]$ and   
let $y = \lambda[\otimes w,\otimes u](z)$.
Consider the left diagram. Let
$t[wu](z) = (v_1,\dots,v_k,v_{k+1},\dots,v_n)$ and 
$t[w](y) = (v'_1,\dots,v'_k)$. 
Since $\lambda[w,u]$ is the projection onto $I[w]$ we have
$\lambda[w,u]\circ t[wu](z) = (v_1,\dots,v_k)$, so we need to
verify that $(v_1,\dots,v_k) = (v_1',\dots,v_k')$.
By definition of the maps $t$ we have
\begin{align*}
v_1 &= \lambda[s_1,\otimes(s_2\cdots s_n)](z)\\
&= \lambda[s_1,\otimes(s_2\cdots s_k)]\circ
\lambda[\otimes(s_1\cdots s_k),\otimes(s_{k+1}\cdots s_n)](z)\\
&= \lambda[s_1,\otimes(s_2\cdots s_k)]\circ\lambda[\otimes w,\otimes u](z) \\
&= \lambda[s_1,\otimes(s_2\cdots s_k)](y)\\
&= v'_1
\end{align*}
then, for $j\in\{2,\dots,k-1\}$
\begin{align*}
v_j &= \rho[\otimes(s_1\cdots s_{j-1}),s_j]\circ 
\lambda[\otimes(s_1\cdots s_j),\otimes(s_{j+1}\cdots s_n)](z)\\
&= \rho[\otimes(s_1\cdots s_{j-1}),s_j]\circ 
\lambda[\otimes(s_1\cdots s_j),\otimes(s_{j+1}\cdots s_k)] \circ 
\lambda[\otimes(s_1\cdots s_k),\otimes(s_{k+1}\cdots s_n)](z)\\
&= \rho[\otimes(s_1\cdots s_{j-1}),s_j]\circ 
\lambda[\otimes(s_1\cdots s_j),\otimes(s_{j+1}\cdots s_k)] \circ 
\lambda[\otimes w,\otimes u](z)\\  
&= \rho[\otimes(s_1\cdots s_{j-1}),s_j]\circ 
\lambda[\otimes(s_1\cdots s_j),\otimes(s_{j+1}\cdots s_k)](y)\\
&= v'_j
\end{align*}
and finally
\begin{align*}
v_k &= \rho[\otimes(s_1\cdots s_{k-1}), s_k]
\circ \lambda[\otimes(s_1\cdots s_k),\otimes(s_{k+1}\cdots s_n)](z)\\
&= \rho[\otimes(s_1\cdots s_{k-1}), s_k]
\circ \lambda[\otimes w,\otimes u](z)\\  
&= \rho[\otimes(s_1\cdots s_{k-1}),s_k](y)\\
&= v'_k
\end{align*}
as needed. Commutativity of the right diagram is verified analogously.
\end{proof}

\begin{theorem}\label{divide}
Let $\mathcal{S}$ be a $\lambda\rho$-system over a semigroup\/ $\mathbf{S}$, and
let\/ $\mathbf{H}$ be a semigroup. Let
$(\otimes,\mathbf{t})\colon
(\mathbf{S},\mathcal{S})\to \bigl(S^+,\mathcal{S}^+\bigr)$ 
be the transformation from Definition~\ref{free-t}. Then, 
$$
\mathbf{H}^{(\otimes,\mathbf{t})}\colon \mathbf{H}^{[\mathcal{S}^+]}
\to \mathbf{H}^{[\mathcal{S}]}
$$
of Definition~\ref{hom-from-trans} is a surjective homomorphism. 
\end{theorem}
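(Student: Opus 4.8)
The plan is to treat the two assertions --- that $\mathbf{H}^{(\otimes,\mathbf{t})}$ is a homomorphism, and that it is surjective --- separately, since both are short once the machinery set up above is in place.

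For the homomorphism part, I would simply appeal to Theorem~\ref{trans}. The lemma immediately preceding this theorem establishes that $(\otimes,\mathbf{t})\colon(\mathbf{S},\mathcal{S})\to(S^+,\mathcal{S}^+)$ is a transformation, and Theorem~\ref{trans} says that $\mathbf{H}^{(h,\mathbf{t})}$ is a homomorphism for every transformation $(h,\mathbf{t})$ and every semigroup $\mathbf{H}$. So nothing new needs to be computed here.

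The substance is surjectivity, and the key point is that the one-letter words of $S^+$ already suffice. Given a target element $(x,a)\in H^{[\mathcal{S}]}$, with $a\in S$ and $x\in H^{I[a]}$, I would view $a$ as the one-letter word in $S^+$; by Definition~\ref{over-free-sgrp} the set $I[a]$ attached to this word in $\mathcal{S}^+$ is literally the set $I[a]$ from the family underlying $\mathcal{S}$, so $(x,a)$ is also an element of $H^{[\mathcal{S}^+]}$. Then $\otimes a = a$ by the definition of $\otimes$, and $t[a]={id}_{I[a]}$ by Lemma~\ref{well-defd}(1), whence
\[
\mathbf{H}^{(\otimes,\mathbf{t})}(x,a) = (x\circ t[a],\ \otimes a) = (x\circ{id}_{I[a]},\ a) = (x,a).
\]
Thus every element of $H^{[\mathcal{S}]}$ is in the image, and $\mathbf{H}^{(\otimes,\mathbf{t})}$ is onto.

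I do not expect a real obstacle. The only thing demanding attention is keeping the two incarnations of the elements of $S$ apart --- an element $a\in S$ versus the corresponding one-letter word $a\in S^+$ --- and checking that the families $(I[\,\cdot\,])$ over $\mathbf{S}$ and over $S^+$ agree on these one-letter words, which is immediate from Definition~\ref{over-free-sgrp}. Once that bookkeeping is made explicit, the whole argument is the displayed computation above.
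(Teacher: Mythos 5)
Your proposal is correct and follows essentially the same route as the paper: the homomorphism property is delegated to Theorem~\ref{trans} (via the preceding lemma establishing that $(\otimes,\mathbf{t})$ is a transformation), and surjectivity rests on Lemma~\ref{well-defd}(1), i.e.\ $t[a]={id}_{I[a]}$ for one-letter words, exactly as in the paper's proof. Your version merely spells out the preimage computation that the paper leaves implicit.
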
  

\begin{proof}
The map $\mathbf{H}^{(\otimes,\mathbf{t})}$ is a homomorphism by Theorem~\ref{trans}.
Surjectivity follows from Lemma~\ref{well-defd}(1). 
\end{proof}

\subsection{$\lambda\rho$-systems over free monoids}\label{free-constr}
An analogous construction produces a $\lambda\rho$-system
over a free monoid, starting from any system of sets and
maps. Namely, let $\{I[x]:x\in X\}$ be a family of sets, 
let $I$ be a nonempty set, and let $\lambda_x\colon I[x]\to I$ and 
$\rho_x\colon I[x]\to I$ be arbitrary maps. 
Take the free monoid $X^*$, put $I[\varepsilon] = I$, and for each word
$w = x_1x_2\cdots x_k\in X^+$, define $I[{x_1x_2\cdots x_k}]$ to be the set of sequences
$(v_1,v_2,\dots,v_k)\in I[{x_1}] \times \dots \times I[{x_k}]$ such that
\begin{align*}  
  \rho_{x_1}(v_1) &= \lambda_{x_2}(v_2) \\
\rho_{x_2}(v_2) &= \lambda_{x_3}(v_3) \\
 & \vdots  \\
\rho_{x_{k-1}}(v_{k-1}) &= \lambda_{x_k}(v_k). 
\end{align*}
Now define a $\lambda\rho$-system over $X^*$ as follows.
For $w,u\in X^+$ put $\lambda[w,u]$ to be the first projection and
$\rho[w,u]$ to be the second projection, as in Definition~\ref{over-free-sgrp}.
Note, however, that now we only have $I[wu]\subseteq I[w]\times
I[u]$ instead of $I[wu] = I[w]\times I[u]$. The equality holds in particular
cases, for example, if the set $I$ is a singleton.    

It remains to define the maps
$\lambda[\varepsilon, w]$, $\rho[w,\varepsilon]$,
$\lambda[w, \varepsilon]$ and $\rho[\varepsilon, w]$, for any $w\in X^*$.
Put 
$\lambda[w,\varepsilon] = \rho[\varepsilon,w] = id_{I[w]}$
and define the remaining maps inductively. For any $x\in X$ put 
$\lambda[\varepsilon,x] = \lambda_x$, and
$\rho[x,\varepsilon] = \rho_x$.
For $w = \ell r$ with $\ell$ and $r$ nonempty, assuming
$\lambda[\varepsilon,\ell]$ and $\rho[r,\varepsilon]$ have already been defined, 
put $\lambda[\varepsilon,w] =
\lambda[\varepsilon,\ell]\circ\lambda[\ell,r]$
and $\rho[w,\varepsilon] =
\rho[r,\varepsilon]\circ\rho[\ell,r]$.

It can be shown that the resulting system $\mathcal{X}^*$ of sets and maps is
a $\lambda\rho$-system. Figure~\ref{ns-pre-l-r-system} 
illustrates first stages of its construction. If $I$ is a singleton,
then $\mathcal{X}^+$ of Definition~\ref{over-free-sgrp} is a subsystem of
$\mathcal{X}^*$ obtained by deleting $I$ and all the maps into $I$.

\begin{figure}
\begin{tikzpicture}[>=stealth,auto] 
\node (I_xyz) at (0,0) {$I[xyz]$};
\node (I_x) at (-4,-4) {$I[x]$};
\node (I_z) at (4,-4) {$I[z]$};
\node (I_xy) at (-2,-2) {$I[xy]$};
\node (I_yz) at (2,-2) {$I[yz]$};
\node (I_y) at (0,-4) {$I[y]$};
\node (I_1) at (-6,-6) {$I$};
\node (I_2) at (-2,-6) {$I$};
\node (I_3) at (2,-6) {$I$};
\node (I_4) at (6,-6) {$I$};
\draw[dashed,->,bend right=50] (I_xyz)  to node[sloped]
{$\lambda[\varepsilon,xyz]$} (I_1);
\draw[dashed,->,bend left=50] (I_xyz) to node[sloped]
{$\rho[xyz,\varepsilon]$} (I_4);
\draw[dashed,->,bend right] (I_xy) to node[sloped]
{$\lambda[\varepsilon,xy]$} (I_1);
\draw[dashed,->,bend right] (I_yz) to node[pos=0.7,sloped]
{$\lambda[\varepsilon,yz]$} (I_2);
\draw[dashed,->,bend left] (I_xy) to node[pos=0.7,sloped]
{$\rho[xy,\varepsilon]$} (I_3);
\draw[dashed,->,bend left] (I_yz) to node[sloped]
{$\rho[yz,\varepsilon]$} (I_4);
\draw[->] (I_x) to node[swap,sloped] {$\lambda_x = \lambda[\varepsilon,x]$} (I_1);  
\draw[->] (I_x) to node[swap,sloped] {$\rho_x = \rho[x,\varepsilon]$} (I_2);  
\draw[->] (I_y) to node[sloped,swap] {$\lambda_y= \lambda[\varepsilon,y]$} (I_2);  
\draw[->] (I_y) to node[swap,sloped] {$\rho_y = \rho[y,\varepsilon]$} (I_3);  
\draw[->] (I_z) to node[swap,sloped] {$\lambda_z= \lambda[\varepsilon,z]$} (I_3);  
\draw[->] (I_z) to node[swap,sloped] {$\rho_z = \rho[z,\varepsilon]$} (I_4);  
\draw[dashed,->,bend right] (I_xyz) to node[pos=0.3,sloped] {$\lambda[x,yz]$} (I_x);
\draw[dashed,->,bend left] (I_xyz) to node[pos=0.3,sloped] {$\rho[xy,z]$} (I_z);
\draw[dashed,->] (I_xyz) to node[pos=0.7,swap,sloped] {$\lambda[xy,z]$} (I_xy);
\draw[dashed,->] (I_xyz) to node[swap,pos=0.7,sloped] {$\rho[x,yz]$} (I_yz);
\draw[dashed,->] (I_xy) to node[pos=0.7,swap,sloped] {$\lambda[x,y]$} (I_x);
\draw[dashed,->] (I_xy) to node[pos=0.3,swap,sloped] {$\rho[x,y]$} (I_y);
\draw[dashed,->] (I_yz) to node[pos=0.7,swap,sloped] {$\rho[y,z]$} (I_z);
\draw[dashed,->] (I_yz) to node[pos=0.3,swap,sloped] {$\lambda[y,z]$} (I_y);
\end{tikzpicture}
\caption{A system of sets and maps extending to a $\lambda\rho$-system}
\label{ns-pre-l-r-system}
\end{figure}
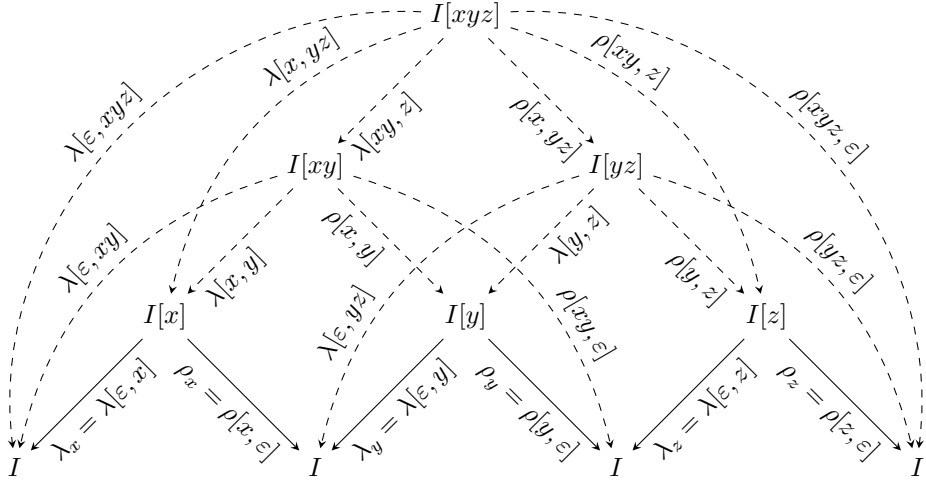

\section{$\lambda\rho$-systems over monoids}\label{simpli}

If $\mathbf{S}$ is a monoid (with identity element $1$),
then any $\lambda\rho$-system
constructed over $\mathbf{S}$ will contain a set $I[1]$, and maps 
$\lambda[a,1]$, $\lambda[1,a]$, $\rho[a,1]$, $\rho[1,a]$ for any $a\in S$.
It is immediate from the conditions
($\alpha$), ($\beta$) and ($\gamma$) that 
the maps $\rho[1,a]$ and $\lambda[a,1]$ are
commuting retractions, that is, they satisfy
\begin{itemize}  
\item $\lambda[a,1]\circ\lambda[a,1] = \lambda[a,1]$
\item $\rho[1,a]\circ\rho[1,a] = \rho[1,a]$
\item $\rho[1,a]\circ\lambda[a,1]  =  \lambda[a,1]\circ\rho[1,a]$
\end{itemize}
for each $a\in S$. In fact, for monoids it is 
reasonable to require more: that $\rho[1,a]$ and $\lambda[a,1]$ are identity maps.
We will now define a general preservation 
requirement, whose special case will apply to monoids. 

\begin{definition}\label{P-preserving}
Let $P$ be a property of semigroups, and let
$\mathcal{S} = (\mathbf{I},\blambda,\brho)$ be a $\lambda\rho$-system
over $\mathbf{S}$. 
We will say that $\mathcal{S}$ \emph{preserves} $P$
\textup{(}or, \emph{is $P$ preserving}\textup{)}, if 
for every $\mathbf{H}$, whenever $\mathbf{H}$ satisfies $P$,
so does $\mathbf{H}^{[\mathcal{S}]}$. 
\end{definition}  

Said concisely, $\mathcal{S}$ is $P$ preserving, if
$\forall\mathbf{H}\colon P(\mathbf{H})\Rightarrow P(\mathbf{H}^{[\mathcal{S}]})$.
If $P$ is the property of having a unit, then 
$\mathcal{S}$ is $P$ preserving (\emph{unit-preserving})
if and only if $\mathbf{H}^{[\mathcal{S}]}$ is a monoid, for every monoid $\mathbf{H}$.

\begin{theorem}\label{main-monoid}
Let $\mathcal{S} = (\mathbf{I},\blambda,\brho)$ be a
$\lambda\rho$-system over $\mathbf{S}$. The following are equivalent:
\begin{enumerate}
\item $\mathcal{S}$ is unit-preserving,
\item $\mathbf{S}$ is a monoid \textup{(}with unit element $1$\textup{)} and the maps
$\lambda[a,1]$ and $\rho[1,a]$ are the identity maps on $I[a]$, for each $a\in S$,
\item $\mathbf{S}$ is a monoid and there exists a nontrivial monoid $\mathbf{H}$
    such that $\mathbf{H}^{[\mathcal{S}]}$  is a monoid.
\end{enumerate}
\end{theorem}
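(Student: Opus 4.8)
The plan is to prove the cycle (1)$\Rightarrow$(3)$\Rightarrow$(2)$\Rightarrow$(1). The implication (1)$\Rightarrow$(3) is immediate: take $\mathbf{H}$ to be any nontrivial monoid (the trivial semigroup is itself a monoid, so $\mathbf{S}\cong\mathbf{1}^{[\mathcal{S}]}$ is a monoid whenever $\mathcal{S}$ is unit-preserving, by Example~\ref{skel} applied after noting $\mathbf{1}$ is a monoid; more directly, applying (1) with $\mathbf{H}=\mathbf{1}$ shows $\mathbf{S}$ is a monoid), and then $\mathbf{H}^{[\mathcal{S}]}$ is a monoid by (1). So the content is in (3)$\Rightarrow$(2) and (2)$\Rightarrow$(1).

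For (2)$\Rightarrow$(1), fix a monoid $\mathbf{H}$ with unit $1_H$, and consider the element $(\overline{1_H}, 1)\in \mathbf{H}^{[\mathcal{S}]}$, where $1$ is the unit of $\mathbf{S}$ and $\overline{1_H}\colon I[1]\to H$ is the constant map. I would check it is a two-sided unit: for any $(x,a)\in \mathbf{H}^{[\mathcal{S}]}$,
$$
(\overline{1_H},1)\star(x,a) = \bigl((\overline{1_H}\circ\lambda[1,a])\cdot(x\circ\rho[1,a]),\ 1\cdot a\bigr).
$$
The first factor is again a constant map equal to $\overline{1_H}$ on $I[a]$ (composition of a constant map with anything is constant), so it acts as the identity of $H^{I[a]}$; and $\rho[1,a] = id_{I[a]}$ by (2), so the product is $(x,a)$. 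The computation of $(x,a)\star(\overline{1_H},1)$ is symmetric, using $\lambda[a,1]=id_{I[a]}$. Hence $\mathbf{H}^{[\mathcal{S}]}$ is a monoid.

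For (3)$\Rightarrow$(2), suppose $\mathbf{S}$ is a monoid and $\mathbf{H}^{[\mathcal{S}]}$ is a monoid for some nontrivial monoid $\mathbf{H}$; let $(z, e)\in \mathbf{H}^{[\mathcal{S}]}$ be its unit. First I would pin down $e$: for any $a\in S$ and any $(x,a)$ we need $ea = ae = a$ in $\mathbf{S}$, so $e$ is the unit $1$ of $\mathbf{S}$, and $z\in H^{I[1]}$. Next, the unit law $(z,1)\star(x,a) = (x,a)$ forces
$$
(z\circ\lambda[1,a])\cdot(x\circ\rho[1,a]) = x
$$
in $H^{I[a]}$ for every $x\in H^{I[a]}$, and symmetrically $(x\circ\lambda[a,1])\cdot(z\circ\rho[a,1]) = x$. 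The task is to extract from these identities, using that $\mathbf{H}$ is a nontrivial monoid, that $\rho[1,a]$ and $\lambda[a,1]$ are the identity on $I[a]$. The natural move is to specialise $x$ cleverly. Pick distinct elements $g\neq h$ in $H$. For a fixed $i\in I[a]$, take $x$ to be the map sending $i\mapsto g$ and everything else to $h$ (or to $1_H$). Evaluating the identity $(z\circ\lambda[1,a])(i)\cdot x(\rho[1,a](i)) = x(i) = g$ and doing the same with a different choice of $x$ separating $i$ from $\rho[1,a](i)$ should force $\rho[1,a](i) = i$; the constant-on-$H$ component $z\circ\lambda[1,a]$ must then be forced to $1_H$ as well by taking $x = \overline{1_H}$, giving $z\circ\lambda[1,a] = \overline{1_H}$ pointwise, hence $z(\lambda[1,a](i)) = 1_H$ for all $i$. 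Running the symmetric argument with $\lambda[a,1]$ finishes (2).

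The main obstacle is the bookkeeping in (3)$\Rightarrow$(2): one must argue that $\rho[1,a](i)=i$ for \emph{every} $i\in I[a]$ rather than merely deduce that $\rho[1,a]$ is a retraction (which already follows from ($\beta$)), and the only leverage is the nontriviality of $\mathbf{H}$ combined with the freedom to choose $x$ pointwise. The delicate point is handling the interaction of the two factors: if $\lambda[1,a](i)$ happened to coincide with $\rho[1,a](i)$ one must ensure the test functions still separate enough points. Choosing test functions of the form $x_i^g$ (value $g$ at $i$, value $1_H$ elsewhere) and comparing $x_i^g\star$-identities for two distinct non-unit values of $g$, together with the all-$1_H$ function, should be enough, but this is where care is needed.
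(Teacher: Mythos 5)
Your proposal is correct and takes essentially the same route as the paper: the cycle (1)$\Rightarrow$(3)$\Rightarrow$(2)$\Rightarrow$(1), with the constant-unit element witnessing (2)$\Rightarrow$(1) and nontriviality of $\mathbf{H}$ exploited through test functions in (3)$\Rightarrow$(2). The only difference is that the paper streamlines (3)$\Rightarrow$(2) by first plugging the constant map $\overline{e}$ into the unit law to conclude $y\circ\lambda[1,a]=\overline{e}$, after which the law collapses to $x = x\circ\rho[1,a]$ for all $x$ and nontriviality immediately gives $\rho[1,a]={id}_{I[a]}$; this also shows that the ``delicate point'' you flag is a non-issue, since $z(\lambda[1,a](i))$ is a fixed element of $H$ not depending on the test function $x$, so no interaction between $\lambda[1,a](i)$ and $\rho[1,a](i)$ can interfere.
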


\begin{proof}
To show that (1) implies (3) we only need to prove that $\mathbf{S}$ is a monoid.  
Take $\mathbf{1}^{[\mathcal{S}]}$ for the
trivial monoid $\mathbf{1}$. Then $\mathbf{1}^{[\mathcal{S}]} \cong \mathbf{S}$
and since $\mathcal{S}$ is unit-preserving, $\mathbf{S}$ is a monoid.  

To show that (3) implies (2) let     
$\mathbf{H}$ be a nontrivial monoid with the unit element $e$ such that
$\mathbf{H}^{[\mathcal{S}]}$ is a monoid. Let $1$ be the unit element of
$\mathbf{S}$ and let 
$(y,b)$ be the unit element of $\mathbf{H}^{[\mathcal{S}]}$.
Then $(y,b)\star(x,1) = (x,1)$
for any $x\in H^{I[1]}$, which implies $b\cdot 1 = 1$, so
$b = 1$. Next, taking $(\overline{e},a)$ for any $a\in S$ (where $\overline{e}$
is the constant map from $I[a]$ to $H$ identically equal to $e$), we have
\begin{align*}
  (\overline{e},a) &= (y,1)\star(\overline{e},a)\\
  &= \bigl((y\circ\lambda[1,a])\cdot(\overline{e}\circ\rho[1,a])\
                   ,a\bigr)\\ 
      &= \bigl((y\circ\lambda[1,a])\cdot\overline{e},\ a\bigr)\\
      &= (y\circ\lambda[1,a],\ a).
\end{align*}
This implies that $y\circ\lambda[1,a]$ is also identically $e$.
Therefore, for any $(x,a)$ we obtain
\begin{align*}
(x,a) &= (y,1)\star(x,a)\\
      &= \bigl((y\circ\lambda[1,a])\cdot(x\circ\rho[1,a]),\ a\bigr)\\
      &= \bigl(\overline{e}\cdot(x\circ\rho[1,a]),\ a\bigr)\\
      &= (x\circ\rho[1,a],\ a),
\end{align*}
and therefore $x = x\circ\rho[1,a]$. This holds for an arbitrary $x$,
and as $x$ can be non-constant by nontriviality of $\mathbf{H}$, 
we have $\rho[1,a] = {id}_{I[a]}$. The proof for $\lambda[a,1]$
follows the same lines, but multiplying by identity on the right.
We begin by expanding the right-hand side of
$(\overline{e}, a) = (\overline{e}, a)\star
(y,1)$ to get that $y\circ \rho[a,1]$ is identically equal to $e$.  
Next, we expand the right-hand side of $(x,a) = (x,a)\star(y,1)$
to get $(x,a) = (x\circ\lambda[a,1], a)$ and thus
$x = x\circ\lambda[a,1]$ for an arbitrary $x$, showing that
$\lambda[a,1] = id_{I[a]}$. This ends the proof of (3) $\Rightarrow$ (2). 

To show that (2) implies (1), let $\mathbf{H}$ be any monoid (with identity
element $e$). 
Since $\mathbf{S}$ is a monoid, the set $I[1]$ exists. If $I[1] = \emptyset$,
then $I[s]=\emptyset$ for all $s\in S$ by Proposition~\ref{ideal}, and
then $\mathbf{H}^{[\mathcal{S}]}\cong \mathbf{S}$ (cf. Example~\ref{empty}). 
Assume $I[1]\neq\emptyset$. Since $\mathbf{H}$ is a
monoid, the constant function $\overline{e}$ belongs to $H^{I[1]}$. 
Then, for an arbitrary $(x,a)$ we have 
\begin{align*}
(\overline{e},1)\star (x,a) &= \bigl((\overline{e}\circ\lambda[1,a])\cdot
                              (x\circ\rho[1,a]),\ 1\cdot a\bigr)\\ 
&= (\overline{e}\cdot (x\circ {id}_{I[a]}),\ a\bigr)\\
&= (x, a)
\end{align*}
showing that $(\overline{e},1)\in H^{I[1]}$ is a left unit. A completely
symmetric argument shows that it is a right unit as well. 
\end{proof}

If a $\lambda\rho$-system satisfies the equivalent conditions of
Theorem~\ref{main-monoid}, we will call it \emph{unital}. This piece of
terminology is, strictly speaking, 
redundant, but we find it conceptually useful as a name for an intrinsic
characterisation of being unit-preserving.
The $\lambda\rho$-system of Example~\ref{lzero} is not unital, but
the one of Example~\ref{flip-flop} is, and so is the $\lambda\rho$-system
constructed at the end of Section~\ref{free-constr}.

\section{$\lambda\rho$-systems over groups}\label{wreath}

We have seen in Proposition~\ref{action-wreath-block} that 
every wreath product can be realised as a $\lambda\rho$-product.
Here we will show that for groups the converse is also true.
Let $\mathbf{G}$ be a group acting on a set $X$ on the right, so that we have
$x\ast e = x$ and $(x\ast a)\ast b = x\ast ab$ for any $x\in X$ and
$a,b\in G$. For any such pair $(X,\mathbf G)$ and any group $\mathbf{H}$ recall that 
their wreath product $\mathbf H\wr (X,\mathbf G)$ is
a semidirect product $\mathbf{H}^X\rtimes  \mathbf{G}$ with multiplication
defined by $(u,g)\star(w,h)= (u\cdot (w\circ(\bl \ast g)),\ gh)$.
It is easy to see that any $(X,\mathbf G)$ defines a
$\lambda\rho$-system
$$
\mathcal{S}(X,\mathbf G) =
\bigl(\langle \lambda[g,h],\rho[g,h]\rangle\colon
I[gh]\to I[g]\times I[h]\bigr),
$$
where $I[g]=X$ for any $g\in G$, and 
\begin{enumerate}
\item $\lambda[g,h] = {id}_X$ for any $g,h\in G$,
\item $\rho[g,h] = \bl\ast g$ for all $g,h\in G$, 
\end{enumerate}
as stated immediately after Proposition~\ref{action-wreath-block}. Then 
$\mathbf H^{[\mathcal S(X,\mathbf G)]}\cong \mathbf{H}\wr (X,\mathbf G)$.

\begin{theorem}\label{wr-prod}
Let $\mathcal{S} = (\mathbf{I},\blambda,\brho)$  be a
$\lambda\rho$-system over a semigroup $\mathbf{G}$.
Then, the following are equivalent:
\begin{enumerate}
\item $\mathcal{S}$ is group-preserving,
\item $\mathbf{G}$ is a group and $\mathcal{S}$ is unital,
\item $\mathbf{G}$ is a group and
  $(\mathbf{G},\mathcal{S})\cong (\mathbf{G},\mathcal{S}(X,\mathbf{G}))$
with $\mathbf{G}$ acting on some set $X$. 
\end{enumerate}
\end{theorem}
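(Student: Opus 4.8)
The strategy is to establish the cycle $(1)\Rightarrow(2)\Rightarrow(3)\Rightarrow(1)$, with $(3)\Rightarrow(1)$ being essentially free from the discussion preceding the theorem and Proposition~\ref{action-wreath-block}, since $\mathbf{H}^{[\mathcal{S}(X,\mathbf{G})]}\cong\mathbf{H}\wr(X,\mathbf{G})$ is a group whenever $\mathbf{H}$ is (a semidirect product of a group by a group acting by automorphisms-of-the-underlying-set is a group), and isomorphic $\lambda\rho$-systems yield isomorphic $\lambda\rho$-products for every $\mathbf{H}$ by Theorem~\ref{trans} together with Lemma~\ref{isom}. For $(1)\Rightarrow(2)$: a group is in particular a monoid, so group-preservation implies unit-preservation, hence $\mathbf{G}$ is a monoid and $\mathcal{S}$ is unital by Theorem~\ref{main-monoid}; to see $\mathbf{G}$ is a group, apply the hypothesis with $\mathbf{H}$ the trivial group to get $\mathbf{1}^{[\mathcal{S}]}\cong\mathbf{G}$ a group (cf.\ Example~\ref{skel}).

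The substance is $(2)\Rightarrow(3)$. Assume $\mathbf{G}$ is a group with unit $e$ and $\mathcal{S}$ is unital, so $\lambda[g,e]=\rho[e,g]=id_{I[g]}$ for all $g$. The plan is to take $X := I[e]$ and to produce, for each $g\in G$, a bijection $t[g]\colon I[e]\to I[g]$, together with a right action $\ast$ of $\mathbf{G}$ on $X=I[e]$, so that $(e,\mathbf{t})$ (with $e$ here the identity of $\mathbf{G}$, abusing notation as the transformation's semigroup component being the identity isomorphism) is an isomorphism of general $\lambda\rho$-systems $(\mathbf{G},\mathcal{S})\to(\mathbf{G},\mathcal{S}(X,\mathbf{G}))$ in the sense of Lemma~\ref{isom}. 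The natural candidates: using $\gamma$-type relations and the group inverse, one expects $\lambda[g,g^{-1}]\colon I[e]\to I[g]$ and $\rho[g^{-1},g]\colon I[e]\to I[g]$ to be relevant; one of these should be a bijection with the other (or a composite with a suitable $\lambda/\rho$) providing its inverse, because $gg^{-1}=g^{-1}g=e$ forces the relevant composites to land on $I[e]$ where $\mathcal{S}$ is unital. Concretely I would try $t[g] := \lambda[g,g^{-1}]$ and look for its inverse among $\rho[\cdot,\cdot]$ maps indexed by $g^{-1},g$; applying ($\gamma$) with the triple $(g,g^{-1},g)$ and ($\alpha$), ($\beta$) with triples involving $e$ should collapse the composites to identities using unitality. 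Once the $t[g]$ are bijections, the action is forced: define $x\ast g$ so that the commuting square for $\rho$ in Lemma~\ref{isom} reads $\rho[a,b]$ transported across the $t$'s to $\bl\ast a$, i.e.\ $t[b](x)\mapsto$ apply $\rho[a,b]$ $\mapsto$ should equal $t[ab](x\ast a)$; this defines $\ast$, and the two-sided/associativity conditions ($\beta$) and ($\gamma$) for $\mathcal{S}$ translate exactly into $x\ast(ab)=(x\ast a)\ast b$ and $x\ast e=x$ (the latter from unitality). The $\lambda$-square of Lemma~\ref{isom} must then be checked to come out as $\lambda[a,b]$ transported to $id_X$, which should follow from ($\alpha$) and ($\gamma$) after expressing $\lambda[a,b]$ via the chosen $t$'s and the inverses.

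The main obstacle I anticipate is pinning down the correct formulas for $t[g]$ and the inverse maps, and verifying bijectivity — the $\lambda\rho$-axioms ($\alpha$), ($\beta$), ($\gamma$) only give equalities of \emph{composites}, so to extract that a single map is invertible I must exhibit an explicit two-sided inverse built from other structure maps and then use unitality ($\lambda[g,e]=\rho[e,g]=id$) together with cancellation by $g^{-1}$ at the level of indices to collapse the round-trip composite to the identity. Getting the variances and the left/right bookkeeping consistent (the coordinate-swap phenomenon visible already in the kite examples of the introduction) is where sign-like errors will lurk; I would double-check by specialising to $\mathbf{G}$ finite cyclic and to the already-understood case where $\mathcal{S}$ arises from an action. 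Once the bijections $t[g]$ and the action $\ast$ are in hand, the remaining verifications that $(e,\mathbf{t})$ satisfies the two commuting diagrams of Lemma~\ref{groth-arrows}/Lemma~\ref{isom} are routine diagram chases using exactly ($\alpha$), ($\beta$), ($\gamma$), and the conclusion $(\mathbf{G},\mathcal{S})\cong(\mathbf{G},\mathcal{S}(X,\mathbf{G}))$ follows.
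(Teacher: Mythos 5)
Your plan follows essentially the same route as the paper's proof: (1)$\Rightarrow$(2) via Theorem~\ref{main-monoid} and the trivial group, (3)$\Rightarrow$(1) from wreath products of groups being groups, and for the substantive (2)$\Rightarrow$(3) taking $X=I[e]$, proving the structure maps involved are bijections by exhibiting two-sided inverses among the structure maps (unitality plus ($\alpha$), ($\beta$) with $g^{-1}$ in the indices), transporting $\mathcal{S}$ along these bijections via Lemma~\ref{isom}, and reading off the action from the forced $\rho$-diagram. The paper realises exactly this plan --- $t[g]=\lambda[e,g]$ (the two-sided inverse of your $\lambda[g,g^{-1}]$, obtained from ($\alpha$) and unitality, so the inverse is a $\lambda$-map rather than a $\rho$-map indexed by $(g^{-1},g)$), the action $i\cdot g=(\rho[g,e]\circ\lambda[e,g]^{-1})(i)$, and diagram/action verifications that are precisely the ($\alpha$), ($\beta$), ($\gamma$) manipulations you indicate --- so the obstacles you anticipate do resolve as you describe.
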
 
\begin{proof}
Recall from Definition~\ref{P-preserving} that group-preserving means
$\mathbf{H}^{[\mathcal{S}]}$ is a group for any group $\mathbf{H}$.  

\smallskip\noindent
(1) $\Rightarrow$ (2). Group-preserving
$\lambda\rho$-systems preserve units, so 
$\mathcal{S}$ is unital.
Consider the trivial group $\mathbf{1}$.
Since $\mathcal{S}$ is a 
$\lambda\rho$-system over $\mathbf{G}$, we have that
$\mathbf{1}^{[\mathcal S]}\cong \mathbf{G}$, and
since $\mathcal{S}$ is group-preserving, 
$\mathbf{G}$ is a group.

\smallskip\noindent
(2) $\Rightarrow$ (3).
Let $\mathcal{S} = (\mathbf{I},\blambda,\brho)$  be a
unital $\lambda\rho$-system over a group $\mathbf{G}$.
Since $\mathcal{S}$ is unital, we have
$$
{id}_{I[g]}=\lambda[g,e]=\lambda [g,hh^{-1}] =
\lambda [g,h]\circ \lambda[gh,h^{-1}]
$$
for all $g,h\in G$ ($e$ is the unit of $\mathbf{G}$, of course).
Consequently, $\lambda [g,h]$ is surjective
and $\lambda [gh,h^{-1}]$ is injective for all $g,h\in G$. However,
$\lambda[g,h]=\lambda [ghh^{-1},h]$ and thus $\lambda[g,h]$ is a
bijection. Analogously we can prove bijectivity of $\rho[g,h]$. 
 
\begin{claim}
Consider the pair $(I[e],\mathbf G)$.
The operation $\cdot\colon I[e]\times
G\to I[e]$ defined by  
$$
i\cdot g = (\rho[g,e]\circ\lambda[e,g]^{-1})(i)
$$
is a group action.
\end{claim}
\begin{proof}[Proof of claim.]
We have $i\cdot e = (\rho[e,e]\circ\lambda[e,e]^{-1})(i) = i$ since
$\rho[e,e]$ and $\lambda[e,e]$ are identity maps, as $\mathcal{S}$ is unital.   
Since $e$ is the unit of $\mathbf{G}$, we have 
$\lambda[eg,h]=\lambda[ge,h]$ and $\rho[g,he]=\rho[g,eh]$, and so we can
substitute the equalities
\begin{eqnarray*}
\lambda[eg,h]&=&\lambda[e,g]^{-1}\circ \lambda[e,gh]\\
\rho[g,he]&=&\rho[h,e]^{-1}\circ \rho[gh,e]
\end{eqnarray*}
into the equality
$$
\lambda[e,h]\circ \rho[g,eh]=\rho[g,e]\circ\lambda[ge,h]
$$
to obtain
$$
\lambda [e,h]\circ \rho[h,e]^{-1}\circ
\rho[gh,e]=\rho[g,e]\circ\lambda[e,g]^{-1}\circ\lambda[e,gh]
$$
and hence
$$
\rho[h,e]\circ\lambda[e,h]^{-1} \circ\rho[g,e]\circ  \lambda
[e,g]^{-1}=\rho[gh,e]\circ \lambda[e,gh]^{-1}.\eqno{(\ddag)}
$$
It is easy to see that the last equality implies $(i\cdot g)\cdot h= i\cdot
(gh)$ for all $i\in I[e]$ and $g,h\in G$. 
\end{proof}

Now we will show that the system of bijections
$\mathbf{t} = \bigl(\lambda[e,g]\colon I[g]\to I[e]\bigr)_{g\in G}$
induces a transformation $(id_G, \mathbf{t})\colon
(\mathbf{G},\mathcal{S}) \to \mathcal{S}(X,\mathbf{G})$
with $id_G$ the identity map on $G$; hence the desired isomorphism. 
We need to prove commutativity of the following diagrams: 
$$
\begin{tikzpicture}[>=stealth,auto]
\node (tab) at (0,0) {$I[gh]$};
\node (ab) at (3,0) {$I[e]$};
\node (ta) at (0,-2) {$I[g]$};
\node (a) at (3,-2) {$I[e]$};
\draw[->] (tab) to node {$\lambda[e,gh]$} (ab);
\draw[->] (tab) to node[swap] {$\lambda [g,h]$} (ta);
\draw[->] (ab) to node[swap] {$id_{I[e]}$} (a);
\draw[->] (ta) to node {$\lambda[e,g]$} (a);
\end{tikzpicture} 
\qquad
\begin{tikzpicture}[>=stealth,auto]
\node (tab) at (0,0) {$I[gh]$};
\node (ab) at (4,0) {$I[e]$};
\node (tb) at (0,-2) {$I[h]$};
\node (b) at (4,-2) {$I[e].$};
\draw[->] (tab) to node {$\lambda[e,gh]$} (ab);
\draw[->] (tab) to node[swap] {$\rho[g,h]$} (tb);
\draw[->] (ab) to node[swap] {$\rho[g,e]\circ\lambda[e,g]^{-1}$} (b);
\draw[->] (tb) to node {$\lambda[e,h]$} (b);
\end{tikzpicture} 
$$
Commutativity of the first diagram is clear. Composing
both sides of $(\ddag)$ on the left with $\rho[h,e]^{-1}$
and using $\rho[g,h]=\rho[h,e]^{-1}\circ\rho[gh,e]$, we obtain 
$$\lambda[e,h]^{-1}\circ\rho[g,e]\circ\lambda[e,g]^{-1}=\rho[g,h]\circ\lambda [e,gh]^{-1}$$
which proves commutativity of the second diagram. 

\smallskip\noindent
(3) $\Rightarrow$ (1). Follows from the fact that wreath product of groups
is a group. 
\end{proof}

Theorem~\ref{wr-prod} shows that $\lambda\rho$-products over groups coincide
with wreath products, as long as they always produce groups.
We saw that for semigroups the notion of a $\lambda\rho$-products is more
general. Combining Krohn-Rhodes Theorem, Theorem~\ref{wr-prod}, and
Example~\ref{flip-flop}, we get a little application.

\begin{cor}
Every finite semigroup divides an iterated
$\lambda\rho$-product whose factors are finite simple groups and a two-element
semilattice.
\end{cor}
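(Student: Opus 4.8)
The plan is to invoke the Krohn--Rhodes Theorem as a black box and then refine the group-free part of its statement using the material developed in the paper. Recall that Krohn--Rhodes says every finite semigroup $\mathbf{M}$ divides an iterated wreath product of factors, each of which is either a finite simple group dividing $\mathbf{M}$, or the three-element flip-flop monoid $L_2^1$ (equivalently $R_2^1$). Since divisibility is transitive and wreath products are a special case of $\lambda\rho$-products by Proposition~\ref{action-wreath-block}, it suffices to show that $L_2^1$ itself divides an iterated $\lambda\rho$-product whose factors are finite simple groups and a two-element semilattice, and then to check that plugging such a decomposition into a larger iterated wreath/$\lambda\rho$-product still yields a $\lambda\rho$-product of the desired form.

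First I would record that $L_2^1$ divides a $\lambda\rho$-product of $\mathbb{Z}_2$ over the two-element semilattice $\mathbf{2}$: this is exactly Example~\ref{flip-flop}, where a congruence $\theta$ on $\mathbb{Z}_2^{[\mathcal{Z}]}$ was exhibited with $\mathbb{Z}_2^{[\mathcal{Z}]}/\theta \cong L_2^1$. Since $\mathbb{Z}_2$ is a finite simple group and $\mathbf{2}$ is the two-element semilattice, this already expresses $L_2^1$ as dividing a $\lambda\rho$-product of the required two kinds of factors (here the iteration has length one). Next I would substitute: given a Krohn--Rhodes decomposition of $\mathbf{M}$ as an iterated wreath product $\mathbf{A}_1 \wr (\mathbf{A}_2 \wr (\cdots \wr \mathbf{A}_n))$ with each $\mathbf{A}_i$ a simple group or $L_2^1$, I replace every occurrence of $\mathbf{A}_i = L_2^1$ by the $\lambda\rho$-product $\mathbb{Z}_2^{[\mathcal{Z}]}$, using that $L_2^1 \mid \mathbb{Z}_2^{[\mathcal{Z}]}$. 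The point is that wreath products are monotone with respect to division in each coordinate, so the resulting object still has $\mathbf{M}$ as a divisor, and by Proposition~\ref{action-wreath-block} each wreath product is itself a $\lambda\rho$-product, so the whole construction is an iterated $\lambda\rho$-product whose leaves are finite simple groups and copies of $\mathbf{2}$.

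The step I expect to require the most care is the bookkeeping of divisibility through iterated $\lambda\rho$-products: one needs that if $\mathbf{B}_i \mid \mathbf{A}_i$ for each $i$ then the iterated $\lambda\rho$-product built from the $\mathbf{A}_i$ divides (or is divided by, in the appropriate direction) the one built from the $\mathbf{B}_i$ --- in other words, that $\lambda\rho$-products behave functorially enough with respect to strong division to allow componentwise substitution. For the pure wreath-product layers this is classical, and for the single substituted $L_2^1 \rightsquigarrow \mathbb{Z}_2^{[\mathcal{Z}]}$ layer the relevant fact is just transitivity of division together with the observation that feeding $\mathbb{Z}_2^{[\mathcal{Z}]}$ (rather than $L_2^1$) as the ``$\mathbf{H}$'' in an outer wreath product only enlarges the semigroup, so nothing is lost. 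I would therefore state a brief lemma (or cite the standard one) that division is preserved under forming wreath products in the top coordinate, apply it once per flip-flop factor, and conclude. Everything else is a direct appeal to Krohn--Rhodes, to Example~\ref{flip-flop}, and to Proposition~\ref{action-wreath-block}, with transitivity of $\mid$ tying the pieces together.
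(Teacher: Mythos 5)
Your argument is correct and is essentially the paper's own: the paper proves the corollary simply by combining the Krohn--Rhodes Theorem, the realisation of wreath products as $\lambda\rho$-products (Proposition~\ref{action-wreath-block}), and the decomposition of $L_2^1$ in Example~\ref{flip-flop}, exactly as you do. Your extra remarks on the classical fact that division is preserved coordinatewise in iterated wreath products just make explicit the bookkeeping the paper leaves implicit.
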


\section{Acknowledgements}

The authors sincerily thank the anonymous referee for several rounds of detailed
comments. They were invaluable for improving both the presentation and the
content. 

This research has received funding from the European Union’s \emph{Horizon 2020}
research and innovation programme under the Marie Skłodowska-Curie grant
agreement No.~689176. The first author was also supported by
the bilateral project \emph{New perspectives on residuated posets} of the
Austrian Science Fund (FWF): project I 1923-N25, and the Czech Science
Foundation (GACR): project 15-34697L.

\begin{bibdiv}
\begin{biblist}  

\bibselect{Botur181113}

\end{biblist}
\end{bibdiv}

\end{document}